\documentclass[12pt]{article}
\usepackage{amsmath,amssymb,amsthm,amscd}
\sloppy

\title{Computation of the canonical form
for the matrices of chains and cycles
of linear mappings\footnotetext{This is the author's version of a work that was published in Linear Algebra Appl. 376 (2004) 235--263.}}
\author{Vladimir V. Sergeichuk%
\thanks{The research was done while the
author was visiting the University of
Utah supported by NSF grant
DMS-0070503.}\\ Institute of
Mathematics\\ Tereshchenkivska 3, Kiev,
Ukraine\\sergeich@imath.kiev.ua}

\date{}

\begin{document}
 \maketitle
\begin{abstract}
Paul Van Dooren \cite{doo} constructed
an algorithm for the computation of all
irregular summands in Kronecker's
canonical form of a matrix pencil. The
algorithm is numerically stable since
it uses only unitary transformations.

We construct a unitary algorithm for
computation of the canonical form of
the matrices of a chain of linear
mappings
\[
V_1 \,\frac{}{\qquad}\,
V_2\,\frac{}{\qquad}\, \cdots
\,\frac{}{\qquad}\, V_t
\]
and extend Paul Van Dooren's algorithm
to the matrices of a cycle of linear
mappings\\[-2mm]
\[
 \unitlength 0.8mm
\linethickness{0.4pt}
\begin{picture}(85.33,0)
(11,0)
\put(25.33,0.00){\makebox(0,0)[cc]{$V_1$}}
\put(65.33,0.00){\makebox(0,0)[cc]{$\cdots$}}
\put(85.33,0.00){\makebox(0,0)[cc]{$V_t$}}
\put(45.33,0.00){\makebox(0,0)[cc]{$V_2$}}
\put(30.33,0.00){\line(1,0){10.00}}
\put(50.33,0.00){\line(1,0){10.00}}
\put(70.33,0.00){\line(1,0){10.00}}
\put(29.83,-1.67){\line(-4,1){0.2}}
\bezier{208}(81.00,-1.67)(55.33,-7.33)(29.83,-1.67)
\end{picture}
\\[9pt]
\]
where all $V_i$ are complex vector
spaces and each line denotes
$\longrightarrow$ or $\longleftarrow$.

{\it AMS classification:} 15A21, 15A22,
16G20

{\it Keywords:} Canonical forms;
Pencils of matrices; Unitary
transformations; Stable algorithms
\end{abstract}

\renewcommand{\le}{\leqslant}
\renewcommand{\ge}{\geqslant}

\newcommand{\rank}{\mathop{\rm rank}\nolimits}

\newcommand{\fr}{1/2}
\newcommand{\frr}{3/4}
\newcommand{\lin}{\,\frac{}{\quad}\,}
\newcommand{\llin}{\,\frac{}{\quad\ }\,}
\newcommand{\lli}{\!\frac{}{\quad\ }\!}
\newcommand{\lllin}{\frac{}{\qquad}}
\newcommand{\is}{\stackrel
{\text{\raisebox{-1ex}{$\sim\
\;$}}}{\to}}

\newtheorem{theorem}{Theorem}[section]
\newtheorem{lemma}{Lemma}[section]
\newtheorem{proposition}{Proposition}[section]

\theoremstyle{remark}
\newtheorem{example}{Example}[section]

\section{Introduction}   \label{s1}

All matrices and vector spaces are
considered over the field $\mathbb C$
of complex numbers.

By the theorem on pencils of
matrices (see \cite[Sect.
XII]{gan}), every pair of
$p\times q$ matrices reduces
by transformations of
simultaneous equivalence
\begin{equation}\label{1.a}
(A_1,\,A_2)\mapsto
(R^{-1}A_1S,\,R^{-1}A_2S)
\end{equation}
($R$ and $S$ are arbitrary nonsingular
matrices) to a direct sum, determined
uniquely up to permutation of summands,
of pairs of the form
\begin{equation}\label{1.3'}
(I_n,J_n(\lambda)),\ (J_n(0),I_n),\
(F_n,G_n),\ (F_n^T,G_n^T),
\end{equation}
where
\begin{equation}\label{1.4}
F_n=\begin{bmatrix}
1&0&&0\\&\ddots&\ddots&\\0&&1&0
\end{bmatrix},\quad
G_n=\begin{bmatrix}
0&1&&0\\&\ddots&\ddots&\\0&&0&1
\end{bmatrix}, \quad n\ge 1,
\end{equation}
are $(n-1)\times n$ matrices, and
$J_n(\lambda)$ is a Jordan block. The
direct sum of pairs is defined by
\[
(A,B)\oplus(C,D)= (A\oplus C,\,B\oplus
D)=\left(\begin{bmatrix}
  A & 0 \\
  0 & C
\end{bmatrix},\: \begin{bmatrix}
  B & 0 \\
  0 & D
\end{bmatrix}\right).
\]

Note that $F_1$ and $G_1$ in
\eqref{1.4} have size $0\times 1$. It
is agreed that there exists exactly one
matrix, denoted by $0_{n0}$, of size
$n\times 0$ and there exists exactly
one matrix, denoted by $0_{0n}$, of
size $0\times n$ for every nonnegative
integer $n$; they represent the linear
mappings $0\to {\mathbb C}^n$ and
${\mathbb C}^n\to 0$ and are considered
as zero matrices. Then
$$
M_{pq}\oplus 0_{m0}=\begin{bmatrix}
  M_{pq} & 0 \\
  0 &0_{m0}
\end{bmatrix}=\begin{bmatrix}
  M_{pq}& 0_{p0} \\
  0_{mq}& 0_{m0}
\end{bmatrix}=\begin{bmatrix}
M_{pq} \\ 0_{mq}
\end{bmatrix}
$$
and
$$
M_{pq}\oplus 0_{0n}=\begin{bmatrix}
  M_{pq} & 0 \\
  0 & 0_{0n}
\end{bmatrix}=\begin{bmatrix}
  M_{pq}& 0_{pn} \\
  0_{0q}& 0_{0n}
\end{bmatrix}=\begin{bmatrix}
   M_{pq} & 0_{pn}
\end{bmatrix}
$$
for every $p\times q$ matrix $M_{pq}$.

P. Van Dooren \cite{doo} constructed an
algorithm that for every pair $(A,B)$
of $p\times q$ matrices calculates a
simultaneously equivalent pair
\begin{equation*}\label{1.4a}
  (A_1,B_1)\oplus\dots\oplus (A_r,B_r)
  \oplus (C,D),
\end{equation*}
where all $(A_i,B_j)$ are of the form
\[
(I_n,J_n(0)),\ (J_n(0),I_n),\
(F_n,G_n),\ (F_n^T,G_n^T),
\]
and the matrices $C$ and $D$ are
nonsingular. The pair $(C,D)$ is called
a {\it regular part} of $(A,B)$ and is
simultaneously equivalent to a direct
sum of pairs of the form
$(I_n,J_n(\lambda))$ with $\lambda\ne
0$. This algorithm uses only
transformations \eqref{1.a} with {\it
unitary} $R$ and $S$, which is
important for its numerical stability.

In this article we construct a unitary
algorithm for computation of the
canonical form of the system of
matrices of a chain of linear mappings
\begin{equation}\label{aa1.1}
V_1\ \frac{{\cal{A}}_1}{\qquad}\
  V_2\ \frac{{\cal{A}}_2}{\qquad}\
  \cdots\
  \frac{{\cal{A}}_{t-1}}{\qquad}\
  V_t
\end{equation}
(see Proposition \ref{t2.a}) and extend
Van Dooren's algorithm to the matrices
of a cycle of linear mappings\\[3mm]
\begin{equation}\label{1.1}
\unitlength 1.00mm
\linethickness{0.4pt}
\begin{picture}(95.00,5.00)(0,-3)
\put(-15.00,0.00){\makebox(0,0)[cc]{$\cal
A$:}}
\put(0.00,0.00){\makebox(0,0)[cc]{$V_1$}}
\put(40.00,0.00){\makebox(0,0)[cc]{$\cdots$}}
\put(60.00,0.00){\makebox(0,0)[cc]{$V_{t-1}$}}
\put(80.00,0.00){\makebox(0,0)[cc]{$V_t$}}
\put(20.00,0.00){\makebox(0,0)[cc]{$V_2$}}
\put(5.00,0.00){\line(1,0){10.00}}
\put(25.00,0.00){\line(1,0){10.00}}
\put(45.00,0.00){\line(1,0){10.00}}
\put(65.00,0.00){\line(1,0){10.00}}
\put(10.00,5.00){\makebox(0,0)[ct]{${\cal
A}_1$}}
\put(30.00,5.00){\makebox(0,0)[ct]{${\cal
A}_2$}}
\put(50.00,5.00){\makebox(0,0)[ct]{${\cal
A}_{t-2}$}}
\put(70.00,5.00){\makebox(0,0)[ct]{${\cal
A}_{t-1}$}}
\bezier{300}(4.00,-2.00)(40.00,-12.00)(76.00,-2.00)
\put(40.00,-12.00){\makebox(0,0)[cb]{${\cal
A}_t$}}
\put(83.67,-0.00){\makebox(0,0)[lc]{$,\quad
t\ge 2\,,$}}
\end{picture}
\end{equation}
\\[13pt]
(see Theorem \ref{th}), where each line
is the arrow $\longrightarrow$ or the
arrow $\longleftarrow$ and
$V_1,\dots,V_t$ are vector spaces.

For instance, the linear mappings
${\cal A}_1$ and ${\cal A}_2$ of a
cycle
\[
\unitlength 1.00mm
\linethickness{0.4pt}
\begin{picture}(90.00,5.00)(-29,0)
\put(-1.00,0.00){\makebox(0,0)[cc]{$V_1$}}
\put(2.00,1.00){\vector(1,0){26.00}}
\put(2.00,-1.00){\vector(1,0){26.00}}
\put(31.00,0.00){\makebox(0,0)[cc]{$V_2$}}
\put(15.00,4.00){\makebox(0,0)[cc]{${\cal
A}_1$}}
\put(15.00,-4.5){\makebox(0,0)[cc]{${\cal
A}_2$}}
\end{picture}\\[1em]
\]
are represented by a pair of matrices
$(A_1,A_2)$ with respect to bases in
$V_1$ and $V_2$, and a change of the
bases reduces this pair by
transformations of simultaneous
equivalence \eqref{1.a}; in this case
our algorithm coincides with Van
Dooren's algorithm.

Similarly, the linear mappings ${\cal
A}_1$ and ${\cal A}_2$ of a cycle
\[
\unitlength 1.00mm
\linethickness{0.4pt}
\begin{picture}(90.00,5.00)(30,0)
\put(59.00,0.00){\makebox(0,0)[cc]{$V_1$}}
\put(62.00,1.00){\vector(1,0){26.00}}
\put(88.00,-1.00){\vector(-1,0){26.00}}
\put(91.00,0.00){\makebox(0,0)[cc]{$V_2$}}
\put(75.00,4.00){\makebox(0,0)[cc]{${\cal
A}_1$}}
\put(75.00,-4.5){\makebox(0,0)[cc]{${\cal
A}_2$}}
\end{picture}\\*[1em]
\]
are represented by a pair $(A_1,A_2)$,
and a change of the bases in $V_1$ and
$V_2$ reduces this pair by
transformations of {\it contragredient
equivalence}
\begin{equation*}\label{1c}
(A_1,\,A_2)\mapsto
(R^{-1}A_1S,\,S^{-1}A_2R).
\end{equation*}

The {\it direct sum} of the cycle
\eqref{1.1} and a cycle
\begin{equation*}
\unitlength 1.00mm
\linethickness{0.4pt}
\begin{picture}(95.00,5.00)(-15,0)
\put(-15.00,0.00){\makebox(0,0)[cc]{${\cal
A}'$:}}
\put(0.00,0.00){\makebox(0,0)[cc]{$V'_1$}}
\put(40.00,0.00){\makebox(0,0)[cc]{$\cdots$}}
\put(60.00,0.00){\makebox(0,0)[cc]{$V'_{t-1}$}}
\put(80.00,0.00){\makebox(0,0)[cc]{$V'_t$}}
\put(20.00,0.00){\makebox(0,0)[cc]{$V'_2$}}
\put(5.00,0.00){\line(1,0){10.00}}
\put(25.00,0.00){\line(1,0){10.00}}
\put(45.00,0.00){\line(1,0){10.00}}
\put(65.00,0.00){\line(1,0){10.00}}
\put(10.00,5.00){\makebox(0,0)[ct]{${\cal
A}'_1$}}
\put(30.00,5.00){\makebox(0,0)[ct]{${\cal
A}'_2$}}
\put(50.00,5.00){\makebox(0,0)[ct]{${\cal
A}'_{t-2}$}}
\put(70.00,5.00){\makebox(0,0)[ct]{${\cal
A}'_{t-1}$}}
\bezier{300}(4.00,-2.00)(40.00,-12.00)(76.00,-2.00)
\put(40.00,-12.00){\makebox(0,0)[cb]{${\cal
A}'_t$}}
\end{picture}\\*[30pt]
\end{equation*}
with the same orientation of arrows is
the cycle ${\cal A}\oplus{\cal A}'$:
$$
\unitlength 1.00mm
\linethickness{0.4pt}
\begin{picture}(135.00,15.00)(13,-8)
\put(25.00,0.00){\makebox(0,0)[cc]{$V_1\oplus
V_1'$}}
\put(65.00,0.00){\makebox(0,0)[cc]{$V_2\oplus
V_2'$}}
\put(45.00,3.00){\makebox(0,0)[cc]{${\cal
A}_1\oplus{\cal A}'_1$}}
\put(35.00,0.00){\line(1,0){20.00}}
\put(75.00,0.00){\line(1,0){20.00}}
\put(85.00,3.00){\makebox(0,0)[cc]{${\cal
A}_2\oplus{\cal A}_2'$}}
\put(115.00,3.00){\makebox(0,0)[cc]{${\cal
A}_{t-1}\oplus{\cal A}_{t-1}'$}}
\put(100.00,0.00){\makebox(0,0)[cc]{$\cdots$}}
\put(135.00,0.00)
{\makebox(0,0)[cc]{$V_t\oplus V_t'$}}
\put(105.00,0.00){\line(1,0){20.00}}
\bezier{372}(34.33,-3.67)(80.00,-13.00)(125.67,-3.67)
\put(80.00,-12.0){\makebox(0,0)[cc]
{${\cal A}_t\oplus{\cal A}'_t$}}
\end{picture}
$$\\[-10pt]

A cycle ${\cal A}$ of the form
\eqref{1.1} is called {\it regular} if
all ${\cal A}_i$ are bijections;
otherwise it is called {\it singular}.
By a {\it regularizing decomposition}
of ${\cal A}$, we mean a decomposition
\begin{equation}\label{00}
{\cal A}={\cal D}\oplus\dots \oplus
{\cal G}\oplus{\cal P},
\end{equation}
where $\cal{D},\dots,\cal{G}$ are
direct-sum-indecomposable singular
cycles and ${\cal P}$ is a regular
cycle.

In Section \ref{s1.1} we recall notions
of quiver representations; they allow
to formulate our algorithms
pictorially.

In Section \ref{s1.2} we recall the
classification of chains \eqref{aa1.1}
and cycles \eqref{1.1} of linear
mappings. The classification of cycles
of linear mappings was obtained by
Nazarova \cite{naz} and, independently,
by Donovan and Freislich \cite{don}
(see also \cite{gab+roi}, Theorem
11.1).

In Section \ref{s4} we construct an
algorithm that gets the canonical form
of the matrices of a chain of linear
mappings using only unitary
transformations.

In Sections \ref{s1.3} and \ref{s_main}
we construct an algorithm that gets a
regularizing decomposition \eqref{00}
of a cycle of linear mappings using
only unitary transformations%
\footnote{This improves the numerical
stability of the algorithms.
Nevertheless, this does not guarantee
that the computed structure of the
cycle coincides with its original
structure.}. The singular summands
$\cal{D},\dots,\cal{G}$ will be
obtained in canonical form.

The canonical form of the (nonsingular)
matrices $P_1,\dots,P_t$ of the regular
summand
\begin{equation*}
\unitlength 1.00mm
\linethickness{0.4pt}
\begin{picture}(95.00,5.00)(-15,0)
\put(-15.00,0.00){\makebox(0,0)[cc]{$\cal
P$:}}
\put(0.00,0.00){\makebox(0,0)[cc]{$U_1$}}
\put(40.00,0.00){\makebox(0,0)[cc]{$\cdots$}}
\put(60.00,0.00){\makebox(0,0)[cc]{$U_{t-1}$}}
\put(80.00,0.00){\makebox(0,0)[cc]{$U_t$}}
\put(20.00,0.00){\makebox(0,0)[cc]{$U_2$}}
\put(5.00,0.00){\line(1,0){10.00}}
\put(25.00,0.00){\line(1,0){10.00}}
\put(45.00,0.00){\line(1,0){10.00}}
\put(65.00,0.00){\line(1,0){10.00}}
\put(10.00,5.00){\makebox(0,0)[ct]{${\cal
P}_1$}}
\put(30.00,5.00){\makebox(0,0)[ct]{${\cal
P}_2$}}
\put(50.00,5.00){\makebox(0,0)[ct]{${\cal
P}_{t-2}$}}
\put(70.00,5.00){\makebox(0,0)[ct]{${\cal
P}_{t-1}$}}
\bezier{300}(4.00,-2.00)(40.00,-12.00)(76.00,-2.00)
\put(40.00,-12.00){\makebox(0,0)[cb]{${\cal
P}_t$}}
\end{picture}
\end{equation*}
\\[13pt]
in \eqref{00} is not determined by this
algorithm. We may compute it as
follows. We first reduce $P_1$ to the
identity matrix changing the basis in
the space $U_2$. Then we reduce $P_2$
to the identity matrix changing the
basis in the space $U_3$, and so on
until obtain
\begin{equation}\label{yy}
P_1=\dots=P_{t-1}=I_n.
\end{equation}
At last, changing the bases of all
spaces $U_1,\dots,U_t$ by the same
transition matrix $S$ (this preserves
the matrices \eqref{yy}), we can reduce
the remaining matrix $P_t$ to a
nonsingular Jordan canonical matrix
$\Phi$ by similarity transformations
$S^{-1}P_tS$. Clearly, the obtained
sequence
\[
(I_n,\,\dots,\,I_n,\,\Phi)
\]
is the canonical form of the matrices
of $\cal P$.

\section{Terminology of quiver representations}
\label{s1.1}

The notion of a quiver and its
representations was introduced by
Gabriel \cite{gab} (see also
\cite[Section 7]{gab+roi}) and admits
to formulate classification problems
for systems of linear mappings. A {\it
quiver} is a directed graph; loops and
multiple arrows are allowed. Its {\it
representation} ${\cal A}$ over
$\mathbb C$ is given by assigning to
each vertex $v$ a complex vector space
$V_v$ and to each arrow $\alpha:u\to v$
a linear mapping ${\cal
A}_{\alpha}:V_u\to V_v$ of the
corresponding vector spaces.

For instance, a representation of the
quiver
\[
\unitlength 0.60mm
\linethickness{0.4pt}
\begin{picture}(139.67,30.67)(0,5)
\put(28.67,6.67){\makebox(0,0)[cc]{1}}
\put(70.67,30.67){\makebox(0,0)[cc]{2}}
\put(1.67,6.67){\makebox(0,0)[cc]{$\alpha$}}
\put(70.67,10.67){\makebox(0,0)[cc]{$\gamma$}}
\put(46.67,21.67){\makebox(0,0)[cc]{$\beta$}}
\put(95.67,23.67){\makebox(0,0)[cc]{$\varepsilon
$}}
\put(70.67,-2.33){\makebox(0,0)[cc]{$\delta$}}
\put(32.67,6.67){\vector(1,0){76.00}}
\put(32.67,2.67){\vector(1,0){76.00}}
\put(32.67,10.67){\vector(2,1){34.00}}
\put(76.67,27.67){\vector(2,-1){32.00}}
\put(112.67,6.67){\makebox(0,0)[cc]{3}}
\put(139.67,6.67){\makebox(0,0)[cc]{$\zeta$}}
\bezier{112}(116.00,7.33)(134.67,14.33)(136.00,6.67)
\bezier{112}(116.00,6.00)(134.67,-1.00)(136.00,6.67)
\bezier{112}(24.67,7.33)(6.00,14.33)(4.67,6.67)
\bezier{112}(24.67,6.00)(6.00,-1.00)(4.67,6.67)
\put(24.67,6.00){\vector(2,1){1.00}}
\put(117.5,5.50){\vector(-2,1){2.00}}
\end{picture}\\*[6mm]
\]
is a system of linear mappings
\[
\unitlength 0.70mm
\linethickness{0.4pt}
\begin{picture}(139.67,34.67)
\put(27.67,6.67){\makebox(0,0)[cc]{$V_{1}$}}
\put(71.67,30.67){\makebox(0,0)[cc]{$V_{2}$}}
\put(-2.67,6.67){\makebox(0,0)[cc]{${\cal
A}_{\alpha}$}}
\put(70.67,10.67){\makebox(0,0)[cc]{${\cal
A}_{\gamma}$}}
\put(45.0,21.67){\makebox(0,0)[cc]{${\cal
A}_{\beta}$}}
\put(95.67,23.67){\makebox(0,0)[cc]{${\cal
A}_{\varepsilon} $}}
\put(70.67,-1.33){\makebox(0,0)[cc]{${\cal
A}_{\delta}$}}
\put(32.67,6.67){\vector(1,0){76.00}}
\put(32.67,3.67){\vector(1,0){76.00}}
\put(32.67,10.67){\vector(2,1){34.00}}
\put(76.67,27.67){\vector(2,-1){32.00}}
\put(113.67,6.67){\makebox(0,0)[cc]{$V_{3}$}}
\put(144.67,6.67){\makebox(0,0)[cc]{${\cal
A}_{\zeta}$}}
\bezier{112}(119.00,7.33)(137.67,14.33)(139.00,6.67)
\bezier{112}(119.00,6.00)(137.67,-1.00)(139.00,6.67)
\bezier{112}(21.67,7.33)(3.00,14.33)(1.67,6.67)
\bezier{112}(21.67,6.00)(3.00,-1.00)(1.67,6.67)
\put(21.67,6.00){\vector(2,1){1.00}}
\put(120.5,5.50){\vector(-2,1){2.00}}
\end{picture}\\*[4mm]
\]

The number
\begin{equation*}\label{1.2c}
\dim_v{\cal A}:=\dim V_v
\end{equation*}
is called the {\it dimension of $\cal
A$ at the vertex $v$}, the set of these
numbers
\[
\dim{\cal A}:=\{\dim V_v\}_v
\]
is
called the {\it dimension of $\cal A$}.

Two representations $\cal A$ and ${\cal
A}'$ are called {\it isomorphic} if
there exists a set $\cal S$ of linear
bijections ${\cal S}_v: {\cal A}_v\to
{\cal A}'_v$ (assigned to all vertices
$v$) transforming $\cal A$ to ${\cal
A}'$. That is, the diagram
\begin{equation}\label{1.2aa}
\begin{CD}
V_u @>{\cal A}_{\alpha}>> V_v\\
 @V{\cal S}_uVV @VV{\cal S}_vV\\
 V'_u @>{\cal A}'_{\alpha}>> V'_v
\end{CD}
\end{equation}
must be commutative ($ {\cal
A}'_{\alpha}{\cal S}_u={\cal S}_v{\cal
A}_{\alpha}$) for every arrow $\alpha:
u\longrightarrow v$. In this case we
write
\begin{equation}\label{1.3}
  {\cal S}=\{{\cal S}_v\}:
  {\cal A}\is {\cal A}'
  \qquad \text{and} \qquad
  {\cal A}\simeq {\cal A}'.
\end{equation}
 The {\it direct
sum} of ${\cal A}$ and ${\cal A}'$ is
the representation ${\cal A}\oplus{\cal
A}'$ formed by $V_v\oplus V'_v$ and
${\cal A}_{\alpha}\oplus{\cal
A}'_{\alpha}$.

The following theorem is a well-known
corollary of the Krull--Schmidt theorem
\cite[Theorem I.3.6]{bas} and holds for
representations over an arbitrary
field.

\begin{theorem} \label{t.0}
Every representation of a quiver
decomposes into a direct sum of
indecomposable representations
uniquely, up to isomorphism of
summands.
\end{theorem}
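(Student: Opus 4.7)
The plan is to deduce the theorem from the abstract Krull--Schmidt theorem cited in \cite{bas}, applied in the additive category whose objects are representations of the given quiver and whose morphisms are families $\{{\cal S}_v\}$ making every square \eqref{1.2aa} commute. Direct sums in this category are the componentwise ones introduced just before the theorem, so ``decomposition into a direct sum'' has its categorical meaning.

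\emph{Existence.} I would induct on the total dimension $n({\cal A}):=\sum_v\dim V_v$, which is finite because each $V_v$ is finite dimensional. If ${\cal A}$ is indecomposable there is nothing to do; otherwise ${\cal A}\simeq {\cal B}\oplus{\cal C}$ with both summands nonzero, hence $n({\cal B}),n({\cal C})<n({\cal A})$, and each summand decomposes by the inductive hypothesis.

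\emph{Uniqueness.} The classical Krull--Schmidt theorem \cite[Theorem I.3.6]{bas} applies once we verify that for every indecomposable representation ${\cal A}$ the endomorphism ring $\mathrm{End}({\cal A})$ is local. I would prove this via Fitting's lemma: given an endomorphism $\varphi=\{\varphi_v\}$ of ${\cal A}$, for each vertex $v$ the linear map $\varphi_v$ on the finite-dimensional space $V_v$ admits a Fitting decomposition $V_v=\ker(\varphi_v^N)\oplus\mathrm{im}(\varphi_v^N)$ for all $N$ large enough. Because $\varphi$ commutes with every ${\cal A}_\alpha$, these two families of subspaces are preserved by every arrow and therefore assemble into subrepresentations ${\cal A}^{\mathrm{nil}}$ and ${\cal A}^{\mathrm{iso}}$ with ${\cal A}={\cal A}^{\mathrm{nil}}\oplus{\cal A}^{\mathrm{iso}}$, $\varphi$ nilpotent on the first summand and an isomorphism on the second. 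Indecomposability of ${\cal A}$ forces one of the summands to be zero, so every endomorphism of ${\cal A}$ is either nilpotent or invertible. A standard argument then shows that the nilpotent endomorphisms form a two-sided ideal (if $\varphi$ and $\psi$ are nilpotent but $\varphi+\psi$ is invertible, then $1-(\varphi+\psi)^{-1}\varphi=(\varphi+\psi)^{-1}\psi$ would be simultaneously invertible and nilpotent, a contradiction), and this ideal is exactly the complement of the unit group, whence $\mathrm{End}({\cal A})$ is local. Invoking \cite[Theorem I.3.6]{bas} then gives the uniqueness of the decomposition up to permutation and isomorphism of summands, which is \eqref{1.3} applied summand by summand.

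The routine ingredients are the inductive existence step and the final citation; the only genuine work is Step~2, where I must (i)~assemble the vertexwise Fitting decompositions into a decomposition of ${\cal A}$ itself, for which commutativity of \eqref{1.2aa} is crucial, and (ii)~check that the nilpotent endomorphisms of an indecomposable form an ideal. Both are standard, but they are the place where the quiver structure (and not merely finite-dimensionality at each vertex) actually enters the proof.
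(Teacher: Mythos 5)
The paper does not actually prove Theorem~\ref{t.0}: it merely records that the result is ``a well-known corollary of the Krull--Schmidt theorem'' and cites \cite[Theorem I.3.6]{bas}, leaving the derivation to the reader. Your argument supplies exactly the standard derivation behind that citation --- existence by induction on $\sum_v\dim V_v$, and for uniqueness the Fitting-decomposition argument, assembled vertexwise via the commutative squares \eqref{1.2aa}, showing that $\mathrm{End}({\cal A})$ is local whenever ${\cal A}$ is indecomposable --- so it takes precisely the route the paper presupposes, and it is correct. One small compression worth smoothing out: when you argue that the nilpotent endomorphisms form an ideal, the assertions that $(\varphi+\psi)^{-1}\psi$ is nilpotent and that $1-(\varphi+\psi)^{-1}\varphi$ is invertible each rest on the prior observation that, in a finite-dimensional algebra where every element is nilpotent or invertible, any left or right multiple of a nilpotent is a non-unit (hence itself nilpotent); stating that one-line lemma before drawing the contradiction would make the step airtight, but it is a routine fix and not a genuine gap.
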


Every representation of a quiver over
${\mathbb C}$ is isomorphic to a
representation, in which the vector
spaces $V_v$ assigned to the vertices
all have the form ${\mathbb
C}\oplus\dots\oplus {\mathbb C}$. Such
a representation of dimension $\{d_v\}$
with $d_v\in\{0,1,2,\ldots\}$ is
called a {\it matrix representation}%
 \footnote{A matrix representation also
arises when we fix bases in all the
spaces of a representation. As follows
from \eqref{1.2aa}, two matrix
representations are isomorphic if and
only if they give the same
representation but in possible
different bases.}
 and is given by a set
$\mathbb A$ of matrices ${\mathbb
A}_{\alpha}\in {\mathbb C}^{d_v\times
d_u}$ assigned to the arrows
$\alpha:u\longrightarrow v$. We will
consider mainly matrix representations.

For every matrix representation
$\mathbb{A} =\{A_{\alpha}\}$ of a
quiver $\cal Q$, we define the {\it
transpose matrix representation}
\begin{equation}\label{1.xy}
\mathbb{A}^T=\{A_{\alpha}^T\}
\end{equation}
of the quiver ${\cal Q}^T$ obtained
from $\cal Q$ by changing the direction
of each arrow. Clearly,
\begin{equation}\label{1.yx}
{\mathbb
S}=\{S_v\}:\,\mathbb{A}\is\mathbb{B}
\qquad\text{implies}\qquad {\mathbb
S}^T=\{S_v^T\}:\,\mathbb{B}^T\is
\mathbb{A}^T
\end{equation}

The systems of linear mappings
\eqref{aa1.1} and \eqref{1.1} may be
considered as representations of the
quivers
\begin{equation} \label{x1.5}
{\cal{L}} :\qquad 1\
\frac{{\alpha}_1}{\qquad}\
  2\ \frac{{\alpha}_2}{\qquad}\
  \cdots\
  \frac{{\alpha}_{t-2}}{\qquad}\
  {(t-1)}
  \frac{{\alpha}_{t-1}}{\qquad}\
  t
\end{equation}
and\\
\begin{equation}\label{1.2}
\unitlength 1.00mm
\linethickness{0.4pt}
\begin{picture}(90.00,4.00)
(-7,-1)
\put(-13.00,0.00){\makebox(0,0)[cc]{${\cal
C}:$}}
\put(0.00,0.00){\makebox(0,0)[cc]{$1$}}
\put(40.00,0.00){\makebox(0,0)[cc]{$\cdots$}}
\put(65.00,0.00){\makebox(0,0)[cc]{$(t-1)$}}
\put(90.00,0.00){\makebox(0,0)[cc]{$t$}}
\put(20.00,0.00){\makebox(0,0)[cc]{$2$}}
\put(5.00,0.00){\line(1,0){10.00}}
\put(25.00,0.00){\line(1,0){10.00}}
\put(10.00,4.00){\makebox(0,0)[ct]{${\alpha}_1$}}
\put(30.00,4.00){\makebox(0,0)[ct]{${\alpha}_2$}}
\put(50.00,4.00){\makebox(0,0)[ct]{${\alpha}_{t-2}$}}
\put(80.00,4.00){\makebox(0,0)[ct]{${\alpha}_{t-1}$}}
\put(45.00,-11.00){\makebox(0,0)[cb]{${\alpha}_t$}}
\put(45.00,0.00){\line(1,0){10.00}}
\put(75.00,0.00){\line(1,0){10.00}}
\bezier{336}(86.00,-2.33)(45.00,-10.33)(4.00,-2.33)
\end{picture}\\*[35pt]
\end{equation}
with the same orientations of arrows as
in \eqref{aa1.1} and \eqref{1.1}. The
quiver \eqref{1.2} will be called a
{\it cycle}; the symbol $\cal C$ will
always denote the cycle \eqref{1.2}.

If $\mathbb A$ is a matrix
representation of a quiver with an
indexed set of arrows
$\{\alpha_i\,|\,i\in I\}$, we will
write $A_i$ instead of $\mathbb
A_{\alpha_i}$. So a matrix
representation $\mathbb A$ of the cycle
$\cal C$ is given by a sequence of
matrices
\begin{equation*}\label{1.3a}
  \mathbb A=(A_1,\dots,A_t).
\end{equation*}

\section{Classification theorems} \label{s1.2}

In this section, we recall the
classification of representations of
the quivers \eqref{x1.5} and
\eqref{1.2}, and mention articles
considering special cases. Some of
these articles are little known outside
of representation theory.

We first consider the cycles of length
2. The representations of the cycle
$1\rightrightarrows 2$ were classified
by Kronecker \cite{kro} in 1890 (see
also \cite[Sect. V]{gan} or \cite[Sect.
1.8]{gab+roi}): every pair of $p\times
q$ matrices is simultaneously
equivalent to a direct sum of pairs of
the form \eqref{1.3'}. A simple and
short proof of this result was obtained
by Nazarova and Roiter \cite{naz+roi}.

A classification of representations of
the cycle $1\rightleftarrows 2$ was
obtained by Dobrovol$'$skaya and
Ponomarev \cite{dob+pon} in 1965: every
matrix representation is isomorphic to
a direct sum, determined uniquely up to
permutation of summands, of matrix
representations of the form
\begin{equation}\label{1.5}
(I_n,J_n(\lambda)),\ (J_n(0),I_n),\
(F_n,G_n^T),\ (F_n^T,G_n)
\end{equation}
(see \eqref{1.4}). Over an arbitrary
field, the Jordan block $J_n(\lambda)$
is replaced by a Frobenius block
$$
\Phi_n=\begin{bmatrix}
0&1&&\\&\ddots&\ddots&\\&&0&1\\
-\alpha_n& -\alpha_{n-1} &\cdots&
-\alpha_1
\end{bmatrix},
$$
where
\[
x^n+\alpha_1 x^{n-1}+\dots+
\alpha_{n-1}x+\alpha_n= p(x)^t
\]
for some irreducible
polynomial $p(x)$ and some integer $t$.
This result was proved again by
Rubi\'{o} and Gelonch \cite{rub+gel} in
1992, Olga Holtz \cite{hol} in 2000,
and Horn and Merino \cite{hor+mer} in
1995; the last article also contains
many applications of this
classification.

A classification of systems of linear
mappings of the form
$$
\begin{array}{ccc}
      V_1 & \longrightarrow & V_2\\
  \downarrow& &\uparrow \\
  V_3 & \longleftarrow & V_4
\end{array}
$$
was given by Nazarova \cite{naz1} in
1961 over the field with two elements,
and by Nazarova \cite{naz2} in 1967
over an arbitrary field.

A quiver is said to be of {\it tame
type} if the problem of classifying its
representations does not contain the
problem of classifying pairs of
matrices up to simultaneous similarity.
If a quiver $\cal Q$ is not of tame
type, then a full classification of its
representations is impossible since it
must contain a classification of
representations of all quivers, see
\cite[Sect. 3.1]{ser} or \cite[Sect.
2]{bel-ser}. Nevertheless, each
particular representation of $\cal Q$
can be reduced to canonical form, see
\cite{bel1} or \cite[Sect. 1.4]{ser}.

Nazarova \cite{naz} and, independently,
Donovan and Freislich \cite{don} in
1973 classified representations of all
quivers of tame type (see also
\cite[Sect. 11]{gab+roi}). In
particular, they classified
representations of the cycle
\eqref{1.2}, which is of tame type (see
this classification also in
\cite[Theorem 11.1]{gab+roi}). This
classification is not mentioned in many
articles on linear algebra and system
theory that study its special cases
(for instance, in the article by
Gelonch \cite{gel} containing the
classification of representations of
the cycle \eqref{1.2} with orientation
$1\to 2\to \dots \to t\to 1$).

Gabriel \cite{gab} (see also
\cite[Sect. 11]{gab+roi}) classified
representations of all quivers having a
finite number of nonisomorphic
indecomposable representations. In
particular, he classified
representations of the quiver
\eqref{x1.5}.
\medskip

Now we formulate theorems that classify
representations of the quivers
\eqref{x1.5} and \eqref{1.2}.

For every pair of integers $(i,j)$ such
that $1\le i\le j\le t$, we define the
matrix representation
\begin{equation}\label{x1.4}
{\mathbb{L}}_{ij} :\qquad 1\
\frac{0}{\qquad}\ \cdots \
\frac{0}{\qquad}\ i\
\frac{I_{1}}{\qquad}\ \cdots\
\frac{I_{1}}{\qquad}\ j\
\frac{0}{\qquad}\ \cdots\
\frac{0}{\qquad}\ t
\end{equation}
of dimension
$(0,\dots,0,1,\dots,1,0,\dots 0)$ of
the quiver \eqref{x1.5}. By the next
theorem, which holds over an arbitrary
field, the representations
${\mathbb{L}}_{ij}$ form a full set of
nonisomorphic indecomposable matrix
representations of \eqref{x1.5}.

\begin{theorem}[see \cite{gab}]
\label{xt1.1} For every system of
linear mappings \eqref{aa1.1}, there
are bases of the spaces
$V_1,\dots,V_t$, in which the sequence
of matrices of
${\cal{A}}_{1},\dots,{\cal{A}}_{t-1}$
is a direct sum of sequences
$(0,\dots,0,I_{1},\dots,I_{1},0,\dots,0)$
of dimension
$(0,\dots,0,1,\dots,1,0,\dots 0)$. This
sum is determined by the system
\eqref{aa1.1} uniquely up to
permutation of summands.
\end{theorem}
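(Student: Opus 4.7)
The plan is to obtain Theorem \ref{xt1.1} as a consequence of the Krull--Schmidt theorem (Theorem \ref{t.0}): once one knows that every representation of the chain quiver \eqref{x1.5} decomposes into a direct sum of copies of the representations $\mathbb{L}_{ij}$, and that the $\mathbb{L}_{ij}$ are pairwise non-isomorphic indecomposables, uniqueness of the multiplicities is immediate. Non-isomorphism is clear from the dimension vectors. Indecomposability is also easy: an endomorphism of $\mathbb{L}_{ij}$ is a tuple $(c_k)_{k\in[i,j]}$ of scalars which, by compatibility with the identity maps between adjacent one-dimensional spaces of the support, must all be equal, so $\mathrm{End}(\mathbb{L}_{ij})\cong\mathbb{C}$ is local. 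Only existence therefore requires work.

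For existence I would induct on $t$. The base case $t=1$ is trivial, decomposing $V_1$ into $\dim V_1$ copies of $\mathbb{L}_{11}$. For $t>1$, after possibly passing to the transpose representation (using \eqref{1.xy}--\eqref{1.yx}), I may assume the final edge is an arrow $\mathcal{A}_{t-1}\colon V_{t-1}\to V_t$. Choosing any complement $U$ of $\mathrm{Im}(\mathcal{A}_{t-1})$ in $V_t$, the fact that $V_t$ is touched only by $\mathcal{A}_{t-1}$ makes $U$ an isolated direct summand of the full chain, contributing $\dim U$ copies of $\mathbb{L}_{tt}$; after splitting these off I may assume $\mathcal{A}_{t-1}$ is surjective. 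I then apply the inductive hypothesis to the sub-chain $V_1\lin\cdots\lin V_{t-1}$ of length $t-1$, decomposing it into summands $\mathbb{L}_{ij}$ with $j\le t-1$.

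It remains to rearrange this sub-chain decomposition so that $\mathcal{A}_{t-1}$ is diagonal with respect to it: each one-dimensional component of $V_{t-1}$ coming from an $\mathbb{L}_{i,t-1}$ summand should either lie in $\ker\mathcal{A}_{t-1}$ (and then persist as an $\mathbb{L}_{i,t-1}$ summand of the full chain) or be sent to a single vector of a chosen basis of $V_t$ (and then extend, together with its image, to an $\mathbb{L}_{i,t}$ summand). The hard part is achieving this rearrangement. The available freedom is the action of automorphisms of the sub-chain decomposition: isomorphic summands with the same $(i,t-1)$ may be mixed freely by any element of $\mathrm{GL}_{n_{i,t-1}}$ on their multiplicity space, and one also has an arbitrary change of basis of $V_t$. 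I would exploit this by processing the indices $i=t-1,t-2,\dots,1$ in turn: at stage $i$ one reduces the restriction of $\mathcal{A}_{t-1}$ to the $\mathbb{L}_{i,t-1}$-isotypical component of $V_{t-1}$ to rank-normal form, modulo the one-dimensional subspaces of $V_t$ already claimed at earlier stages. The principal technical check is that each such basis change is realizable by an automorphism of the sub-chain decomposition (so that the decomposition already obtained for the sub-chain is preserved) and that the inductive process exhausts $V_t$, which follows from the surjectivity of $\mathcal{A}_{t-1}$. Once this normal form is reached, the listed summands can be read off directly, completing the induction.
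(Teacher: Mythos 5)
Your existence argument has a genuine gap at precisely the step you yourself flag as ``the principal technical check.'' You list the available freedom at vertex $t-1$ as block-diagonal changes of basis mixing only isomorphic $\mathbb{L}_{i,t-1}$-summands (by $\mathrm{GL}_{n_{i,t-1}}$ on each multiplicity space), together with an arbitrary change of basis of $V_t$. That is not the full automorphism group of the decomposed sub-chain, and --- more to the point --- it is not enough to diagonalize $\mathcal{A}_{t-1}$ by the staircase procedure you describe. After you normalize the restriction of $\mathcal{A}_{t-1}$ to one isotypical block and ``claim'' part of $V_t$, the restriction to the next block still has a residual piece landing inside the already-claimed subspace of $V_t$. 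Reducing ``modulo the claimed subspaces'' controls only the composite with the quotient $V_t/(\text{claimed})$; it does not eliminate this residual. Eliminating it requires adding columns of one isotypical block to another, i.e.\ block-triangular (not block-diagonal) transformations at vertex $t-1$. These correspond to nonzero morphisms between non-isomorphic summands $\mathbb{L}_{i,t-1}$ and $\mathbb{L}_{i',t-1}$, which exist only in one direction, and that direction depends on the orientation of the arrows in the sub-chain; hence the processing order cannot be fixed once and for all as $t-1,t-2,\dots,1$. You neither observe that such triangular moves are needed nor check that they extend to automorphisms of the sub-chain decomposition.

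The paper's own treatment of this existence claim is effectively the proof of Proposition \ref{t2.a} in Section \ref{xs2a}, and it is organized around exactly this point: the induction hypothesis there asserts that, with the normalized blocks $D_1,\dots,D_{r-1}$ held fixed, every transformation at vertex $r$ given by a block-triangular matrix $S_r$ of the form \eqref{x3.x} --- whose block structure and triangular direction are dictated by the orientation of $\alpha_{r-1}$ --- can be compensated by further transformations at vertices $1,\dots,r-1$, and this is what licenses killing the off-staircase entries. An analogue of that lemma is what your argument is missing; without it, the rearrangement step of your induction does not close.
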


The classification of representations
of a cycle \eqref{1.2} follows from
Theorem \ref{t.0} and the next fact: if
a matrix representation of this cycle
is direct-sum-indecomposable, then at
least $t-2$ of its matrices are
nonsingular. Clearly, these $t-2$
matrices reduce to the identity
matrices and the remaining two matrices
reduce to the form \eqref{1.3'} or
\eqref{1.5} depending on the
orientation of their arrows. This gives
the following theorem.

\begin{theorem}[see \cite{don} or \cite{naz}]
\label{t1.1} For every system of linear
mappings \eqref{1.1}, there are bases
in the spaces $V_1,\dots,V_t$, in which
the sequence of matrices of
${\cal{A}}_{1},\dots,{\cal{A}}_{t}$ is
a direct sum, determined by \eqref{1.1}
uniquely up to permutation of summands,
of sequences of the following form
$($the points denote sequences of
identity matrices or $0_{00})$:
\begin{itemize}

\item[\rm{(i)}] $(J_n(\lambda),\ldots)$
with $\lambda\ne 0$;

\item[\rm{(ii)}] $(\ldots, J_n(0), \ldots)$ with
$J_n(0)$ at the place $i\in \{1,\ldots,
t\}$;

\item[\rm{(iii)}] $(\ldots, A_i,\ldots,
A_j,\ldots)$, where $A_i$ and $A_j$
depend on the direction of the mappings
${\cal A}_i$ and ${\cal A}_j$ in the
sequence
\begin{equation*}\label{1.6}
  V_1\ \frac{{\cal A}_1}{\qquad}\
  V_2\ \frac{{\cal A}_2}{\qquad}\
  \cdots\
  \frac{{\cal A}_{t-1}}{\qquad}\
  V_t\ \frac{{\cal A}_t}{\qquad}\ V_1
\end{equation*}
$($see \eqref{1.1}$)$ as follows:
\end{itemize}
\[
(A_i,A_j)=
  \begin{cases}
 (F_n,G_n) \text{ or }
(F_n^T,G_n^T) & \text{if ${\cal A}_i$
and ${\cal A}_j$ have opposite
directions}, \\
 (F_n,G_n^T)\text{ or }
 (F_n^T,G_n) & \text{otherwise}.
  \end{cases}
\]
\end{theorem}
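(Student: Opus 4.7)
The plan is to combine the Krull--Schmidt theorem (Theorem \ref{t.0}) with an analysis of indecomposable matrix representations of the cycle quiver $\cal C$. By Theorem \ref{t.0}, it suffices to show that every indecomposable matrix representation of $\cal C$ is isomorphic to one of the summands listed in (i), (ii), (iii), and that all these summands are themselves indecomposable and pairwise nonisomorphic. Uniqueness up to permutation of summands is then automatic from Theorem \ref{t.0}.

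The heart of the argument is the key lemma, stated in the paragraph preceding the theorem: \emph{in any indecomposable matrix representation $(A_1,\ldots,A_t)$ of $\cal C$, at least $t-2$ of the matrices $A_i$ are square and invertible.} I would prove this by contrapositive: assume three of the matrices $A_{i_1},A_{i_2},A_{i_3}$ fail to be square and invertible. Each such $A_{i_k}$ is then either not injective or not surjective (or not square, which falls into one of these two cases when viewed dimensionally), so it admits a nontrivial splitting of either its source or its target: a choice of complement to the kernel, or of complement to the image. The three cuts thus produced break the cycle into three arcs; on each arc the remaining maps, being partly invertible, allow one to transport any chosen splitting along the arc. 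Matching up the transported splittings at the three cut vertices then produces a nontrivial direct-sum decomposition of $(A_1,\dots,A_t)$, contradicting indecomposability. This combinatorial step --- making the splittings globally compatible around the cycle --- is the main obstacle, and the place where the cyclic structure is used critically.

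Granting the lemma, the classification proceeds by cases on the number $s\in\{0,1,2\}$ of non-invertible matrices. If $s\le 1$, then all $V_i$ have a common dimension $n$, and by successive basis changes at $V_2,V_3,\ldots,V_t$ we can reduce the $t-1$ invertible matrices among $A_1,\ldots,A_{t-1}$ (or $A_2,\ldots,A_t$, after a cyclic re-labelling) to $I_n$. The remaining matrix $A\in{\mathbb C}^{n\times n}$ is then acted on only by simultaneous basis changes $S$ at every vertex, which preserve the identities and act on $A$ by similarity $S^{-1}AS$; so $A$ is brought to Jordan canonical form and indecomposability forces a single Jordan block $J_n(\lambda)$. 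If $s=0$, all maps are invertible so $\lambda\ne 0$, giving type (i); if $s=1$, the one non-invertible matrix must carry the Jordan block and $\lambda=0$, giving type (ii).

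In the remaining case $s=2$, let $A_i$ and $A_j$ be the two non-invertible matrices. The other $t-2$ matrices are square and invertible, so successive basis changes along the two arcs of the cycle between positions $i$ and $j$ reduce them to identity matrices. What is left is precisely a matrix representation of a cycle of length two in the pair $(A_i,A_j)$. If ${\cal A}_i$ and ${\cal A}_j$ point in opposite directions around the cycle, this pair reduces by simultaneous equivalence and is classified by Kronecker's theorem, with indecomposables \eqref{1.3'}; if they point in the same direction, it reduces by contragredient equivalence and is classified by Dobrovol$'$skaya--Ponomarev, with indecomposables \eqref{1.5}. Since both $A_i$ and $A_j$ are non-invertible by hypothesis, the Jordan-type indecomposables $(I_n,J_n(\lambda))$ and $(J_n(0),I_n)$ of those classifications are excluded (they would force $s\le 1$); what remains is exactly the list in type (iii).
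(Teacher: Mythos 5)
Your proof reproduces the sketch the paper gives in the paragraph immediately preceding the theorem statement: Krull--Schmidt (Theorem~\ref{t.0}) plus the lemma that a direct-sum-indecomposable representation of the cycle has at most two singular matrices, followed by reduction of the remaining nonsingular matrices to identities and classification of the resulting pair by Kronecker or Dobrovol$'$skaya--Ponomarev. Your case analysis on $s\in\{0,1,2\}$ is correct and usefully more explicit than the paper's one-line version. Two caveats. First, the key lemma carries essentially all the weight, and neither you nor the paper proves it (the paper cites \cite{don} and \cite{naz} for the theorem itself); your sketch of the lemma does not go through as written, since each singular arrow yields a splitting of only one of its two endpoints (a kernel complement on the source or an image complement on the target), and after transporting these decompositions along the nonsingular arcs the two splittings arriving at a given cut vertex have no reason to agree --- ``matching them up'' is exactly where a real argument would have to begin, and you correctly flag this. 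Second, the paper's self-contained proof of this theorem is actually the constructive one in Section~\ref{s1.3}: the regularizing-decomposition algorithm manufactures the decomposition into summands of types (i)--(iii) directly and sidesteps the lemma altogether; see the remark there beginning ``Note that this proves Theorem~\ref{t1.1}.''
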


This theorem, with a nonsingular
Frobenius block instead of
$J_n(\lambda)$ in (i), holds over an
arbitrary field.\medskip

In the remaining part of this section,
we recall Gabriel and Roiter's
construction \cite[Sect. 11.1]{gab+roi}
of summands (ii) and (iii).

For every integer $n$, denote by $[n]$
the natural number such that
\begin{equation*}\label{1'.3}
1\le [n]\le t\quad \text{and} \quad
[n]\equiv n \bmod t\,.
\end{equation*}

 Let
\begin{equation}\label{1'.1}
  l\llin
  (l+1)\llin
  (l+2)\,
 \llin\cdots\llin\,
  r, \qquad 1\le l\le t,
\end{equation}
be a ``clockwise walk'' on the cycle
\eqref{1.2}) that starts at the vertex
$l$, passes through the vertices
\[
[l+1],\ [l+2],\: \dots\:,\ [r-1],
\]
and stops at the vertex $[r]$. This
walk determines the representation
$\cal A$ of $\cal C$ in which each
space ${V}_v$ is spanned by all
$i\in\{l,l+1,\dots,r\}$ such that
$[i]=v$:
$$
{V}_v=\langle i\,|\,l\le i\le r,\ [i]=v
\rangle,
$$
and all the nonzero actions of linear
mappings ${\cal
A}_{\alpha_1},\dots,{\cal
A}_{\alpha_t}$ on the basis vectors are
given by \eqref{1'.1}. The matrices of
${\cal A}_{\alpha_1},\dots,{\cal
A}_{\alpha_t}$ in these bases form a
matrix representation denoted by
\begin{equation}\label{ppp}
 \mathbb G_{lr}.
\end{equation}

\begin{example}\label{e1.a}
The walk
$$
\unitlength 0.50mm
\linethickness{0.4pt}
\begin{picture}(83.00,36.67)(-9,5)
\put(17.00,27.67){\makebox(0,0)[cc]{1}}
\put(22.00,29.67){\vector(2,1){10.00}}
\put(37.00,36.67){\makebox(0,0)[cc]{2}}
\put(57.00,36.67){\vector(-1,0){15.00}}
\put(62.00,36.67){\makebox(0,0)[cc]{3}}
\put(83.00,22.67){\makebox(0,0)[cc]{4}}
\put(79.00,26.00){\vector(-3,2){13.33}}
\put(79.00,20.00){\vector(-3,-2){13.33}}
\put(17.00,17.34){\makebox(0,0)[cc]{$7$}}
\put(22.00,20.34){\vector(2,1){10.00}}
\put(37.00,27.33){\makebox(0,0)[cc]{$8$}}
\put(22.00,15.33){\vector(2,-1){10.00}}
\put(37.00,8.00){\makebox(0,0)[cc]{6}}
\put(57.00,8.00){\vector(-1,0){15.00}}
\put(62.00,8.00){\makebox(0,0)[cc]{5}}
\put(57.00,27.33){\vector(-1,0){15.00}}
\put(62.00,27.33){\makebox(0,0)[cc]{$9$}}
\end{picture}
$$
on the cycle
$$
\unitlength 0.5mm \linethickness{0.4pt}
\begin{picture}(95.33,35.00)(0,10)
\put(30.00,24.67){\makebox(0,0)[cc]{1}}
\put(35.00,27.67){\vector(2,1){10.00}}
\put(50.00,35.00){\makebox(0,0)[cc]{2}}
\put(70.00,35.00){\vector(-1,0){15.00}}
\put(75.00,35.00){\makebox(0,0)[cc]{3}}
\put(95.33,24.67){\makebox(0,0)[cc]{4}}
\put(90.33,27.67){\vector(-2,1){10.00}}
\put(35.00,22.33){\vector(2,-1){10.00}}
\put(50.00,15.00){\makebox(0,0)[cc]{6}}
\put(70.00,15.00){\vector(-1,0){15.00}}
\put(75.00,15.00){\makebox(0,0)[cc]{5}}
\put(90.33,22.33){\vector(-2,-1){10.00}}
\put(11.33,25.00){\makebox(0,0)[cc]{$\cal
C$:}}
\end{picture}
$$
determines the representation
$$
\unitlength 1.00mm
\linethickness{0.4pt}
\begin{picture}(99.33,39.00)(0,5)
\put(26.00,24.67){\makebox(0,0)[cc]
{$\langle{}1,7\rangle$}}
\put(32.33,27.67){\vector(2,1){10.00}}
\put(48.67,35.00){\makebox(0,0)[cc]
{$\langle{}2,8\rangle$}}
\put(70.00,35.00){\vector(-1,0){15.00}}
\put(76.33,35.00){\makebox(0,0)[cc]
{$\langle{}3,9\rangle$}}
\put(99.33,24.67){\makebox(0,0)[cc]
{$\langle{}4\rangle$}}
\put(93.00,27.67){\vector(-2,1){10.00}}
\put(32.33,22.33){\vector(2,-1){10.00}}
\put(48.67,15.00){\makebox(0,0)[cc]
{$\langle{}6\rangle$}}
\put(70.00,15.00){\vector(-1,0){15.00}}
\put(76.33,15.00){\makebox(0,0)[cc]
{$\langle{}5\rangle$}}
\put(93.00,22.33){\vector(-2,-1){10.00}}
\put(7.33,25.00){\makebox(0,0)[cc]
{${\mathbb{G}}_{1,9}$:}}
\put(33.00,32.00){\makebox(0,0)[cc]{$I_2$}}
\put(63.00,38.00){\makebox(0,0)[cc]{$I_2$}}
\put(93.00,35.00){\makebox(0,0)[cc]{$
 \begin{bmatrix}
 1\\0\end{bmatrix}
 $}}
\put(93.00,19.00){\makebox(0,0)[cc]{$I_1$}}
\put(63.00,12.00){\makebox(0,0)[cc]{$I_1$}}
\put(32.00,17.00){\makebox(0,0)[cc]
{$\begin{bmatrix} 0&\!\!\!
1\end{bmatrix}$}}
\end{picture}\\*[-3mm]
$$
\end{example}

\begin{lemma}[see {\cite[Sect.
11.1]{gab+roi}}] \label{l00}
 The set of all\/
$\mathbb G_{lr}$ coincides with the set
of matrix representations of the form
{\rm(ii)} and {\rm(iii)}:

\begin{itemize}
\item[{\rm(a)}]
$\mathbb G_{lr}$ with $r\not\equiv l-1
\bmod t$ is the matrix representation
{\rm(iii)} of dimension
$(d_1,\dots,d_t)$, where $d_i$ is the
number of $n\in\{l, l+1, \dots,r\}$
such that $[n]=i$. {\rm(}Note that all
representations of the form {\rm(iii)}
have distinct dimensions and so they
are determined by their
dimensions.{\rm)}

\item[{\rm(b)}] $\mathbb G_{l,l-1+pt}=
(I_p,\dots,I_p, J_p(0),
I_p,\dots,I_p)$, where $J_p(0)$ is at
the $[l-1]$-st place.
\end{itemize}
\end{lemma}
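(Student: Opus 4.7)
The plan is to compute $\mathbb{G}_{lr}$ directly from the walk \eqref{1'.1} and match the result against the canonical forms of Theorem \ref{t1.1}. Write $r-l+1=qt+s$ with $0\le s\le t-1$; then case (b) is $s=0$ (with $q=p$) and case (a) is $1\le s\le t-1$.

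First I would set up a counting argument. The vertex $[l+j]$ is visited at the positions $l+j,l+j+t,\ldots$ lying in $[l,r]$, which yields
\[
\dim V_{[l+j]}=\begin{cases} q+1 & \text{if } 0\le j\le s-1,\\ q & \text{if } s\le j\le t-1,\end{cases}
\]
and an analogous count of edges of the walk shows that the arrow $\alpha_{[l+j]}$ is used $q+1$ times when $0\le j\le s-2$ and $q$ times otherwise. Comparing these two tables arrow by arrow, an arrow whose two adjacent dimensions agree is traversed exactly that common number of times, so---after indexing the basis of each $V_v$ by the walk positions---such an arrow gives an identity matrix. Only the two arrows with differing adjacent dimensions, namely $\alpha_{[l-1]}$ and, when $s\ge 1$, $\alpha_{[r]}=\alpha_{[l+s-1]}$, produce non-identity blocks.

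For case (b), $\alpha_{[l-1]}$ is the unique under-traversed arrow: it connects two spaces of equal dimension $p$ but is used only $p-1$ times, and in the walk-ordered basis it acts as the shift $l-1+kt\mapsto l+kt$ for $k=1,\ldots,p-1$, whose matrix is $J_p(0)$ at the $[l-1]$-st position. All other matrices equal $I_p$, which is the statement of (b).

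For case (a), each boundary arrow connects a space of dimension $q+1$ to a space of dimension $q$ and is traversed $q$ times, so its matrix has rank $q$ and exactly one all-zero row or column. By choosing the basis of each $V_v$ in the walk order (either increasing or decreasing, which is a change of basis and thus preserves the isomorphism class), the matrices of $\alpha_{[l-1]}$ and $\alpha_{[r]}$ can be brought into one of the shapes $F_n,F_n^T,G_n,G_n^T$ with $n=q+1$. A short case analysis on the orientations of these two arrows shows that opposite directions produce $(F_n,G_n)$ or $(F_n^T,G_n^T)$ and equal directions produce $(F_n,G_n^T)$ or $(F_n^T,G_n)$, which are exactly the four options in Theorem \ref{t1.1}(iii). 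The assertion in (a) that a type (iii) representation is determined by its dimensions then gives a bijection between the walks $(l,r)$ with $r\not\equiv l-1\bmod t$ and the representations of type (iii).

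The main obstacle will be the orientation bookkeeping in case (a): distinguishing $F_n$ from $G_n$ (and $F_n^T$ from $G_n^T$) requires keeping track, for each boundary arrow, of whether its ``missing'' column or row sits at the top/bottom or left/right of the matrix, which is determined jointly by the direction of the arrow and by which endpoint of the walk lies adjacent to it. Once this bookkeeping is carried out consistently for both boundary arrows, the four cases in (iii) come out mechanically.
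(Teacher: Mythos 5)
The paper does not prove this lemma; it cites Gabriel and Roiter (Sect.\ 11.1), so there is no internal proof to compare against. Your plan --- compute $\mathbb G_{lr}$ directly from the walk by counting dimensions and edge traversals, show that interior arrows give identity matrices in the walk-ordered basis, and analyze the one or two boundary arrows --- is the natural approach and would succeed if carried through, but as written the proposal is a sketch with genuine gaps.

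Concretely: (1) The traversal formula is misstated. When $s=0$ and $j=t-1$ the arrow $\alpha_{[l-1]}$ is used $q-1$ times, not $q$; so the blanket claim that ``an arrow whose two adjacent dimensions agree is traversed exactly that common number of times'' fails precisely in case (b), and your subsequent treatment of (b) implicitly corrects it. There are three regimes ($q+1$, $q$, $q-1$), not two. (2) The identity-matrix claim for interior arrows is asserted, not proved. One must check that the $d$ source positions $p_1<\dots<p_d$ all lie in $\{l,\dots,r-1\}$, that the successors $p_i+1$ are exactly the $d$ target positions, and that $i\mapsto i+1$ preserves the walk ordering (in particular, that the target vertex is not $[l]$); this short verification is the crux of the computation and cannot be left out. (3) You explicitly defer the orientation bookkeeping in (a). That bookkeeping is precisely what distinguishes $F_n$ from $G_n$ and $F_n^T$ from $G_n^T$, i.e.\ the entire content of part (a); with only two boundary arrows and four orientation patterns it is a finite check that must be written out. (4) The lemma asserts the two sets \emph{coincide}; you argue one inclusion in detail and dispatch the converse --- that every representation of the form (ii) or (iii) is some $\mathbb G_{lr}$ --- in a single sentence, and only for type (iii). (5) $\mathbb G_{lr}$ is a fixed matrix representation with a fixed (increasing-position) basis; under that convention one obtains $J_p(0)$ when $\alpha_{[l-1]}$ is oriented counterclockwise and $J_p(0)^T$ when it is clockwise, which are permutation-similar but not equal. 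So the conclusion of (b) must either fix the basis ordering to suit the orientation or be read up to reordering of basis, and this should be stated.
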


We will use the following notation. If
all arrows in a representation
\begin{equation}\label{x2.1aac}
\unitlength 0.4mm \linethickness{0.4pt}
\begin{picture}(25.00,25.00)(-20,12)
\put(13.00,24.00){\makebox(0,0)[lc]
{$\scriptstyle{ A_1}$}}
\put(3.00,18.00){\makebox(0,0)[lc]
{$\scriptstyle{ A_2}$}}
\put(13.00,-1.00){\makebox(0,0)[lc]
{$\scriptstyle{ A_n}$}}
\put(0.00,30.00){\makebox(0,0)[rc]
{$u_1$}}
\put(0.00,15.00){\makebox(0,0)[rc]
{$u_2$}}
\put(0.00,5.00){\makebox(0,0)[rc]{$\cdots$}}
\put(0.00,-5.00){\makebox(0,0)[rc]
{$u_n$}}
\put(25.00,10.00){\makebox(0,0)[lc]{$v$}}
\put(3.00,14.00){\line(5,-1){18.00}}
\put(3.00,29.00){\line(5,-4){18.00}}
\put(3.00,-4.00){\line(5,3){18.00}}
\end{picture}
\end{equation}\\[0.5em]
have the same orientation, then instead
of \eqref{x2.1aac} we will write
\begin{equation}\label{x2.1acc}
\unitlength 0.4mm \linethickness{0.4pt}
\begin{picture}(25.00,25.00)(-20,12)
\put(13.00,29.00){\makebox(0,0)[lc]
{\fbox{$\scriptstyle{A}$}}}
\put(0.00,30.00){\makebox(0,0)[rc]
{$u_1$}}
\put(0.00,15.00){\makebox(0,0)[rc]
{$u_2$}}
\put(0.00,5.00){\makebox(0,0)[rc]{$\cdots$}}
\put(0.00,-5.00){\makebox(0,0)[rc]
{$u_n$}}
\put(25.00,10.00){\makebox(0,0)[lc]{$v$}}
\put(3.00,15.00){\line(5,-1){18.00}}
\put(3.00,29.00){\line(5,-4){18.00}}
\put(3.00,-4.00){\line(5,3){18.00}}
\end{picture}
\end{equation}\\[0.5em]
where
\begin{equation}\label{2.1.5aq}
 A=\begin{cases}
 [\,A_1\,|\dots|\,A_n\,]& \text{if
 $u_1\longrightarrow v$,
$u_2\longrightarrow v,\dots,
u_n\longrightarrow v$}, \\[0.1em]
 \left[\begin{tabular}{c}
    $A_1$\\ \hline $\cdots$\\ \hline
    $A_n$
\end{tabular}\right] &
\text{if $u_1\longleftarrow v$,
$u_2\longleftarrow v,\dots,
u_n\longleftarrow v$}.
  \end{cases}
\end{equation}
The partition of $A$ into strips is
fully determined by the dimensions of
\eqref{x2.1aac} at the vertices
$u_1,\dots,u_n$.

\section{Chains of linear mappings}
\label{s4}

In this section we give an algorithm
that calculates the canonical form of
the matrices of a chain of linear
mappings \eqref{aa1.1} using only
unitary transformations.

Choosing bases in the spaces
$V_1,\dots,V_t$, we may represent a
system of linear mappings \eqref{aa1.1}
by the sequence of matrices
${\mathbb{A}}=(A_1,\dots,A_{t-1})$. We
will consider this sequence as a matrix
representation
\begin{equation}\label{x1.2}
{\mathbb{A}} :\qquad 1\
\frac{{{A}}_1}{\qquad}\
  2\ \frac{{{A}}_2}{\qquad}\
  \cdots\
  \frac{{{A}}_{t-1}}{\qquad}\
  t\,.
\end{equation}
of the quiver \eqref{x1.5}.

For every vertex $i$, a change of the
basis in $V_i$ changes $\mathbb{A}$.
This transformation of $\mathbb{A}$
will be called a {\it transformation at
the vertex} $i$. It will be called a
{\it unitary transformation} if the
transition matrix to a new basis of
$V_i$ is unitary.

\subsection*{The algorithm for chains:}

Let $\mathbb{A}$ be a matrix
representation \eqref{x1.2} of
dimension
\begin{equation*}\label{x2.a}
  \dim \mathbb{A}= (d_1,\dots,d_t)
\end{equation*}
of the quiver \eqref{x1.5}. We will
sequentially split $\mathbb{A}$ into
representations of the form
\eqref{x1.4}.
 \medskip

\noindent {\bf Step 1:} By unitary
transformations at vertices 1 and 2, we
reduce $A_1$ to the form
\begin{equation}\label{x2.1'}
B_1=
\begin{bmatrix}
0&H\\0&0
\end{bmatrix},
\end{equation}
where $H$ is a nonsingular $k\times k$
matrix. These transformations change
$A_2$; denote the new matrix by $A_2'$.
Denote also by ${\cal P}_1$ the set
consisting of $d_1-k$ representations
of the form ${\mathbb L}_{11}$.

Next we will transform the
representation $A$ into a
representation $M_1$ of a ``split''
quiver depending on the direction of
$\alpha_1$ in \eqref{x1.5} as follows:
\medskip

{\it Case $\alpha_1:1\longrightarrow
2$}.\ \ Then
\\[0.8em]
\begin{equation*}\label{x2.1aa}
 \unitlength 0.6mm
\linethickness{0.4pt}
\begin{picture}(15.00,17.00)(5,-1)
\put(8.00,13.00){\makebox(0,0)[lc]
{\fbox{$\scriptstyle{ A_2'}$}}}
\put(-90.00,0.00){\makebox(0,0)[lc]{${\mathbb
M}_1$:}}
\put(15.00,0.00){\makebox(0,0)[lc]{$3
\stackrel{A_3}{\lllin} {4}
\stackrel{A_4}{\lllin}
  \cdots
\stackrel{A_{t-1}}{\lllin}
  t$}}
\put(-2.00,-11.00)
{\makebox(0,0)[rc]{${
(d_2-k \text{ copies})}
\quad\cdots\!\!$}}
\put(13.00,-2.00){\line(-1,-1){13.00}}
\put(-2.00,-17.00){\makebox(0,0)[rc]{$2$}}
 \put(2.00,11.50){\makebox(0,0)[rc]
 {${
 (k \text{ copies})}
\quad\cdots\cdots\cdots $}}
\put(13.00,2.00){\line(-1,1){13.00}}
\put(-2.00,19.00){\makebox(0,0)[rc]
{$1\stackrel{I_{1}}{\longrightarrow}2$}}
\put(0.00,-4.00){\line(4,1){13.00}}
\put(-2.00,-5.00){\makebox(0,0)[rc]{$2$}}
\put(0.00,4.00){\line(4,-1){13.00}}
\put(-2.00,7.00){\makebox(0,0)[rc]
{$1\stackrel{I_{1}}{\longrightarrow}2$}}
\end{picture}\\*[3em]
\end{equation*}
(there are $k$ fragments of the form
$1{\longrightarrow} 2\llin 3$ and
$d_2-k$ fragments of the form $2\llin
3$). The direction of the arrows is the
same as in the quiver \eqref{x1.5}.
\medskip

{\it Case $\alpha_1:1\longleftarrow
2$}. Then\\[0.8em]
\begin{equation*}\label{x2.1ab}
 \unitlength 0.6mm
\linethickness{0.4pt}
\begin{picture}(15.00,17.00)(5,-2)
\put(8.00,13.00){\makebox(0,0)[lc]
{\fbox{$\scriptstyle{ A_2'}$}}}
\put(-90.00,0.00){\makebox(0,0)[lc]{${\mathbb
M}_1$:}}
\put(15.00,0.00){\makebox(0,0)[lc]{$3
\stackrel{A_3}{\lllin} {4}
  \stackrel{A_4}{\lllin}
  \cdots
  \stackrel{A_{t-1}}{\lllin}
  t$}}
\put(2.00,-12.00){\makebox(0,0)[rc]
{${
 (k \text{ copies})}
\quad\cdots\cdots\cdots$}}
\put(13.00,-2.00){\line(-1,-1){13.00}}
\put(-2.00,-18.00){\makebox(0,0)[rc]
{$1\stackrel{I_{1}}{\longleftarrow}2$}}
\put(-2.00,10.00){\makebox(0,0)[rc]
{${
 (d_2-k \text{ copies})}
\quad\cdots\!\!$}}
\put(13.00,2.00){\line(-1,1){13.00}}
\put(-2.00,17.00){\makebox(0,0)[rc]{$2$}}
\put(0.00,-4.00){\line(4,1){13.00}}
\put(-2.00,-5.00){\makebox(0,0)[rc]
{$1\stackrel{I_{1}}{\longleftarrow}2$}}
\put(0.00,4.00){\line(4,-1){13.00}}
\put(-2.00,4.00){\makebox(0,0)[rc]{$2$}}
\end{picture}\\*[3.5em]
\end{equation*}

\noindent {\bf Step  $\pmb{r\ (1< r<
t)}$:}\ \ Assume we have constructed in
the step $r-1$ the set ${\cal P}_{r-1}$
consisting of representations of the
form ${\mathbb L}_{ij}$, $1\le i\le
j<r$, and a quiver representation
${\mathbb M}_{r-1}$:\\[0.5em]
\begin{equation}\label{x2.1c}
\unitlength 0.4mm \linethickness{0.4pt}
\begin{picture}(25.00,25.00)(-20,7)
\put(13.00,29.00){\makebox(0,0)[lc]
{\fbox{$\scriptstyle{ A_r'}$}}}
\put(0.00,30.00){\makebox(0,0)[rc]
{${
 (k_{1} \text{ copies})}
 \quad p_1
\stackrel{I_{1}}{\lli} (p_1+1)
 \stackrel{I_{1}}{\lli}\cdots
 \stackrel{I_{1}}{\lli}  r$}}
\put(0.00,15.00){\makebox(0,0)[rc]
{${
 (k_{2}\text{ copies})}
 \quad p_2
 \stackrel{I_{1}}{\lli}(p_2+1)
 \stackrel{I_{1}}{\lli}
 \cdots
 \stackrel{I_{1}}{\lli}  r$}}
\multiput(-172.00,3.00)(5,0){35}{$\cdot$}
\put(0.00,-5.00){\makebox(0,0)[rc]
{${
 (k_{r}\text{ copies})}
\quad p_r
 \stackrel{I_{1}}{\lli}
(p_r+1)\stackrel{I_{1}}{\lli}
 \cdots
\stackrel{I_{1}}{\lli}  r$}}
\put(25.00,10.00){\makebox(0,0)[lc]{$(r+1)\,
\stackrel{A_{r+1}}{\lllin}
  \cdots
  \stackrel{A_{t-1}}{\lllin}
  t$}}
\put(3.00,15.00){\line(5,-1){18.00}}
\put(3.00,29.00){\line(5,-4){18.00}}
\put(3.00,-4.00){\line(5,3){18.00}}
\end{picture}\\*[2.2em]
\end{equation}
in which every
\[
p_i\stackrel{I_{1}}{\llin}
(p_i+1)\stackrel{I_{1}}{\llin}
\cdots\stackrel{I_{1}}{\llin} r
\]
repeats $k_i$ times,
$k_1+\dots+k_r=d_r$, all $k_i\ge 0$,
and
\[\{p_1,p_2,\dots,p_r\}=\{1,2,\dots,r\}.\]
The direction of the arrows is the same
as in the quiver \eqref{x1.5}.
\medskip

{\it Case $\alpha_r:r\longrightarrow
r+1$} (see \eqref{x1.5}).\ \ We divide
$A'_r$ into $r$ vertical strips of
sizes $k_1,k_2,\dots,k_r$ and reduce
$A_r'$ to the form
\begin{equation}\label{x2.6'}
B_r=
\left[\begin{tabular}{cc|cc|cc|c|cc}
 0&$H_{1}$ &$*$&$*$ &$*$&$*$ &$\cdots$ &$*$&$*$\\
 0&0   &0&$H_{2}$   &$*$&$*$ &$\cdots$ &$*$&$*$\\
 0&0   &0&0   &0&$H_{3}$ &$\cdots$ &$*$&$*$\\
 $\cdots$&$\cdots$ &$\cdots$&$\cdots$
&$\cdots$&$\cdots$ &$\cdots$
&$\cdots$&$\cdots$\\
    0&0 &0&0 &0&0 &$\cdots$ &0&$H_{r}$\\
    0&0 &0&0 &0&0 &$\cdots$ &0&0\\
\end{tabular}\right]
\end{equation}
(where each $H_{i}$ is a nonsingular
$l_i\times l_i$ matrix and each $*$ is
an unspecified matrix) starting from
the first vertical strip by unitary
column-transformations within vertical
strips and by unitary
row-transformations. These
row-transformations are transformations
at the vertex $r+1$ and they change
$A_{r+1}$; denote the obtained matrix
by $A_{r+1}'$. Denote also by ${\cal
P}_{r}$ the set obtained from ${\cal
P}_{r-1}$ by including $k_i-l_i$
representations of the form ${\mathbb
L}_{p_ir}$ for all $i=1,\dots,r$.
Construct the quiver representation
\begin{equation*}\label{x2.1d}
\unitlength 0.4mm \linethickness{0.4pt}
\begin{picture}(25.00,20.00)(-50,20)
\put(-190.00,10.00){\makebox(0,0)[rc]
{${\mathbb M}_{r}$:}}
\put(12.00,28.00){\makebox(0,0)[lc]
{\fbox{$\scriptstyle{ A_{r+1}'}$}}}
\put(0.00,25.00){\makebox(0,0)[rc]
{${
 (l_{1} \text{ copies})}
\quad p_1
 \stackrel{I_{1}}{\lli}
 \cdots
 \stackrel{I_{1}}{\lli} (r+1)$}}
\multiput(-143.00,8.00)(5,0){29}{$\cdot$}
\put(0.00,-5.00){\makebox(0,0)[rc]
{${
 (l_{r} \text{ copies})}
 \quad p_r
 \stackrel{I_{1}}{\lli}
 \cdots
 \stackrel{I_{1}}{\lli} (r+1)$}}
\put(25.00,10.00){\makebox(0,0)[lc]{$(r+2)\,
 \stackrel{A_{r+2}}{\lllin}
  \cdots
  \stackrel{A_{t-1}}{\lllin}
  t$}}
\put(3.00,24.00){\line(5,-3){18.00}}
\put(3.00,-4.00){\line(5,3){18.00}}
\put(3.00,-22.00){\line(3,4){18.00}}
\put(0.00,-27.00){\makebox(0,0)[rc]
{${
 (d_{r+1}-l_1-\dots-l_{r} \text{
copies})} \quad \, (r+1)$}}
\end{picture}
\end{equation*}\\[4.3em]
(Hence, $k_i-l_i$ representations
\[{\mathbb L}_{p_ir}:\ p_i
\stackrel{I_{1}}{\lllin}
  (p_i+1)\stackrel{I_{1}}{\lllin}
  \cdots \stackrel{I_{1}}{\lllin} r
\]
for each $i=1,\dots,r$ ``break
away'' from the representation
\eqref{x2.1c} and join to the set
${\cal P}_{r-1}$.) In particular, if
$r=t-1$, then ${\mathbb M}_{r}$ takes
the form\\[2mm]
\begin{equation}\label{x2.1da}
\unitlength 0.4mm \linethickness{0.4pt}
\begin{picture}(25.00,25.00)(-100,0)
\put(0.00,25.00){\makebox(0,0)[rc]
{${
 (l_{1}\text{ copies})}
\quad p_1
 \stackrel{I_{1}}{\lli}
 \cdots
 \stackrel{I_{1}}{\lli}  t$}}
\put(-180.00,0.00){\makebox(0,0)[rc]{${\mathbb
M}_{t-1}:$}}
\multiput(-120.00,8.00)(5,0){24}{$\cdot$}
\put(0.00,-5.00){\makebox(0,0)[rc]
{${
 (l_{t-1}\text{
copies})} \quad p_{t-1}
 \stackrel{I_{1}}{\lli} \cdots
 \stackrel{I_{1}}{\lli}  t$}}
\put(0.00,-27.00){\makebox(0,0)[rc]
{${
 (d_t-l_1-\dots-l_{t-1}
\text{ copies})}\quad \, t$}}
\end{picture}\\*[15mm]
\end{equation}

{\it Case $\alpha_r:r\longleftarrow
r+1$}.\ \ We partition $A'_r$ into $r$
horizontal strips of sizes
$k_1,k_2,\dots,k_{r}$ and reduce $A_r'$
to the form
\begin{equation}\label{x2.7'}
B_r= {\left[\begin{tabular}{ccccccc}
    0&$H_{1}$&$*$  &$\cdots$ &$*$&$*$&$*$\\
    0&0&$*$  &$\cdots$ &$*$&$*$&$*$\\
     \hline\multicolumn{7}{c}
  {$\!\!\!\cdots\cdots\cdots\cdots\cdots\cdots
  \cdots\cdots\cdots\cdots\cdots\cdots\!\!$} \\
\multicolumn{7}{c}
  {$\!\!\!\cdots\cdots\cdots\cdots\cdots\cdots
  \cdots\cdots\cdots\cdots\cdots\cdots\!\!$}\\ \hline
    0&0&0  &$\cdots$ &$H_{{r-2}}$&$*$&$*$\\
    0&0&0  &$\cdots$ &0&$*$&$*$\\ \hline
    0&0&0  &$\cdots$ &0&$H_{{r-1}}$&$*$\\
    0&0&0  &$\cdots$ &0&0&$*$\\ \hline
    0&0&0  &$\cdots$ &0&0&$H_{r}$\\
    0&0&0  &$\cdots$ &0&0&0\\
\end{tabular}\right]},
\end{equation}
(where each $H_{i}$ is a nonsingular
$l_i\times l_i$ matrix) starting from
the lower strip, by unitary
row-transformations within horizontal
strips and by unitary
column-transformations. These
column-transformations are
transformations at the vertex $r+1$ and
they change $A_{r+1}$; denote the
obtained matrix by $A_{r+1}'$. Denote
also by ${\cal P}_{r}$ the set
consisting of the elements of ${\cal
P}_{r-1}$ and $k_i-l_i$ representations
of the form ${\mathbb L}_{p_ir}$ for
all $i=1,\dots,r$. Construct the quiver
representation\\[2em]
\[
\unitlength 0.4mm \linethickness{0.4pt}
\begin{picture}(25.00,25.00)(-45,7)
\put(13.00,38.00){\makebox(0,0)[lc]
{\fbox{$\scriptstyle{ A_{r+1}'}$}}}
\put(0.00,25.00){\makebox(0,0)[rc]
{${
 (l_{1} \text{\
copies})} \quad p_1
 \stackrel{I_{1}}{\lli}\cdots
 \stackrel{I_{1}}{\lli} (r+1)$}}
\multiput(-143.00,8.00)(5,0){29}{$\cdot$}
\put(0.00,-5.00){\makebox(0,0)[rc]
{${
 (l_{r}\text{ copies})}
\quad p_r
 \stackrel{I_{1}}{\lli}
 \cdots
 \stackrel{I_{1}}{\lli} (r+1)$}}
\put(-190.00,10.00){\makebox(0,0)[cc]
{${\mathbb M}_{r}$:}}
\put(25.00,10.00){\makebox(0,0)[lc]{$(r+2)
\stackrel{A_{r+2}}{\lllin}
  \cdots
  \stackrel{A_{t-1}}{\lllin}
  t$}}
\put(3.00,24.00){\line(5,-3){18.00}}
\put(3.00,-4.00){\line(5,3){18.00}}
\put(3.00,42.00){\line(3,-4){18.00}}
\put(0.00,47.00){\makebox(0,0)[rc]
{${
 (d_{r+1}-l_1-\dots-l_{r}\text{
copies)}} \quad \, (r+1)$}}
\end{picture}
\]\\

\subsection*{The result:}
After the step $t-1$, we have obtained
the set ${\cal P}_{t-1}$ consisting of
representations of the form ${\mathbb
L}_{ij}$, $j<t$, and the quiver
representation ${\mathbb M}_{t-1}$ (see
\eqref{x2.1da}), which may be
considered as a set of representations
of the form ${\mathbb L}_{it}$. Define
the representation
\begin{equation}\label{x2.1g}
  {\mathbb L}({\mathbb A})
  =\bigoplus_{{\mathbb
L}_{ij}\in{\cal P}_{t-1}\cup {\mathbb
M}_{t-1}}{\mathbb L}_{ij}.
\end{equation}

The following proposition will be
proved in Section \ref{xs2a}.

\begin{proposition} \label{t2.a}
The representation ${\mathbb
L}({\mathbb A})$ is the canonical form
$($see Theorem \ref{xt1.1}$)$ of a
matrix representation $\mathbb{A}$ of
the quiver \eqref{x1.5}.
\end{proposition}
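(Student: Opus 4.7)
The plan is to prove by induction on $r$ the invariant
\[
\mathbb{A} \simeq \mathbb{M}_r \oplus \bigoplus_{\mathbb{L}_{ij} \in \mathcal{P}_r} \mathbb{L}_{ij}
\qquad (0 \le r \le t-1),
\]
with trivial base case $\mathbb{M}_0 := \mathbb{A}$, $\mathcal{P}_0 := \emptyset$. Applied at $r = t-1$, together with the observation (visible from \eqref{x2.1da}) that $\mathbb{M}_{t-1}$ is itself a direct sum of representations of the form $\mathbb{L}_{it}$, this shows $\mathbb{A} \simeq \mathbb{L}(\mathbb{A})$. Since by Theorem \ref{xt1.1} the representations $\mathbb{L}_{ij}$ form the complete list of pairwise nonisomorphic indecomposable representations of the quiver \eqref{x1.5}, the Krull--Schmidt theorem (Theorem \ref{t.0}) then identifies $\mathbb{L}(\mathbb{A})$ as the canonical form of $\mathbb{A}$.

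For the inductive step I must establish
\[
\mathbb{M}_{r-1} \simeq \mathbb{M}_r \oplus \bigoplus_{i=1}^r (k_i - l_i)\,\mathbb{L}_{p_i r}.
\]
The unitary operations of step $r$ are invertible basis changes that realize an isomorphism of $\mathbb{M}_{r-1}$ with a representation whose block between vertices $r$ and $r+1$ is the matrix $B_r$ of \eqref{x2.6'} or \eqref{x2.7'}: a column transformation within vertical strip $i$ is a basis change at vertex $r$ that must be propagated identically along the chain $p_i \to \cdots \to r$ (to preserve every $I_1$ label), while a row transformation is a basis change at vertex $r+1$ alone. The two possible orientations of $\alpha_r$ are treated symmetrically up to transposition.

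To extract the claimed direct sum from $B_r$, I append further (non-unitary) isomorphisms, valid because the strips are indexed so that $p_1 \le p_2 \le \cdots \le p_r$: the chain of strip $i$ then passes through every vertex that the chain of strip $j > i$ passes through, so subtracting at vertex $r$ a multiple of an $H_i$-column of strip $i$ from a column of strip $j$ can be consistently propagated along the common part of the two chains. Such cross-strip operations annihilate the starred entries of $B_r$, after which a basis change at vertex $r+1$ rescales each $H_i$ to $I_{l_i}$. The resulting clean matrix shows the first $k_i - l_i$ columns of each strip $i$ splitting off as $k_i - l_i$ copies of $\mathbb{L}_{p_i r}$, the remaining $l_i$ columns extending their chains to vertex $r+1$ with identity labels, and the $d_{r+1} - \sum_i l_i$ unreached rows of $V_{r+1}$ forming isolated copies of vertex $r+1$ --- which is precisely the structure of $\mathbb{M}_r$. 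The main subtlety is exactly this last step: the starred entries obstruct a naive direct-sum splitting, and one must verify that each cross-strip subtraction defines a genuine isomorphism of $\mathbb{M}_{r-1}$ by respecting the support of every basis vector at every vertex.
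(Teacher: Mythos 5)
Your overall strategy is the same as the paper's: establish by induction that $\mathbb{A}$ remains isomorphic to a direct sum of the already-stripped summands plus the current $\mathbb{M}_r$ (the paper packages this as $\mathbb{A}\simeq\mathbb{D}_r$ and then reads the direct sum off a ``split'' quiver representation $\mathbb{Q}_r$), and the crux in both cases is that the block-triangular transformation at vertex $r$ which carries $B_r$ to $D_r$ can be absorbed by transformations at vertices $1,\dots,r-1$ without disturbing $D_1,\dots,D_{r-1}$.

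However, your justification of that crux rests on a false premise. You claim that the algorithm indexes the strips so that $p_1\le p_2\le\cdots\le p_r$, and hence that the chain of strip $i$ passes through every vertex met by the chain of strip $j>i$. This is not what the algorithm does. Already after step $1$ with orientation $\alpha_1:1\longleftarrow 2$, the picture for $\mathbb{M}_1$ places the isolated vertex $2$ on top and the chain $1\leftarrow 2$ below, so $(p_1,p_2)=(2,1)$, which is decreasing; more generally the new ``isolated'' vertex $r+1$ is appended at the bottom if $\alpha_r$ points right and at the top if $\alpha_r$ points left, so the resulting sequence $p_1,\dots,p_r$ is monotone increasing only when all arrows point right. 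In the counterexample the containment you invoke fails (strip $2$ meets vertices $\{1,2\}$, strip $1$ only $\{2\}$), yet the upper block-triangular $S_2$ is still correctable: one checks directly that $S_1 D_1 S_2^{-1}=D_1$ can be solved with $S_1$ having the required block-triangular form. So your criterion (chain containment) is neither the actual ordering used by the algorithm nor the reason the transformation is correctable; the true reason depends on the arrow orientations and the way the algorithm stacks the strips (isolated chains go to the image side when the arrow points right, to the kernel side when it points left), which is precisely what makes the triangular structure compatible at every vertex simultaneously. You would need to replace the $p_1\le\cdots\le p_r$ claim by an argument keyed to the orientations, along the lines of the paper's inductive hypothesis about $S_r$ of form \eqref{x3.x}.
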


\section{Cycles of linear mappings}
\label{s1.3}

In this section, we give an algorithm
for constructing a regularizing
decomposition \eqref{00} that involves
only unitary transformations. In the
same way, one may construct a
regularizing decomposition over an
arbitrary field using elementary
transformations.

By analogy with Section \ref{s1}, we
say that a matrix representation
$\mathbb A=(A_1,\dots, A_t)$ of a cycle
$\cal C$ (see \eqref{1.2}) is {\it
regular} if
\[ \dim_1(\mathbb A)=\dots
=\dim_t(\mathbb A)\] and all the
matrices $A_1,\dots, A_t$ are
nonsingular; otherwise the
representation is {\it singular}. A
decomposition
\begin{equation}\label{1.7b}
\mathbb A\simeq \mathbb D \oplus
\dots\oplus \mathbb G \oplus\mathbb{P}
\end{equation}
is a {\it regularizing decomposition}
of $\mathbb A$ if $\mathbb D, \dots,
\mathbb G$ are matrix representations
of the form ${\mathbb G}_{ij}$ (see
Lemma \ref{l00}) and $\mathbb P$ is a
regular representation.  By Theorem
\ref{t1.1}, the regularizing
decomposition \eqref{1.7b} is
determined uniquely up to isomorphism
of summands.

The algorithm works like a jack-plane
in a woodworker's hands. Starting from
the vertex $1$, we cut a {\it shave}:

$$
\unitlength 0.40mm
\linethickness{0.4pt}
\begin{picture}(353.47,164.76)(30,-18)
\put(80.00,30.00){\line(1,0){225.00}}
\put(305.00,45.00){\line(-1,0){225.00}}
\bezier{260}(80.00,30.00)(47.50,32.50)(45.00,65.00)
\bezier{260}(80.00,45.00)(47.50,47.50)(45.00,80.00)
\bezier{260}(80.00,115.00)(47.50,112.50)(45.00,80.00)
\put(45.00,65.00){\line(0,1){15.00}}
\bezier{220}(80.00,100.00)(51.00,98.00)(46.33,72.00)
\bezier{260}(304.33,30.00)(336.83,32.50)(339.33,65.00)
\bezier{260}(304.33,45.00)(336.83,47.50)(339.33,80.00)
\put(339.33,65.00){\line(0,1){15.00}}
\bezier{220}(304.33,100.00)(333.33,98.00)(338.00,72.00)
\put(80.00,100.00){\line(1,0){225.00}}
\put(80.00,115.00){\line(1,0){40.00}}
\put(120.00,115.00){\line(0,-1){5.00}}
\multiput(120.00,110.00)(0,-3.9){3}{\line(0,-1){2}}
\put(134,118){\makebox(0,0)[cc]{$\alpha_{1}$}}
\put(120.99,90){\makebox(0,0)[cc]{$1$}}
\put(120.00,110.00){\line(1,0){15.00}}
\put(135.00,110.00){\line(1,0){159.00}}
\multiput(146.99,113.23)(3.95,0){36}{\line(1,0){2}}
\multiput(146.99,113.23)(0,-3.7){4}{\line(0,-1){2}}
\put(146.99,90){\makebox(0,0)[cc]{$2$}}
\put(336.67,88.71){\line(3,2){16.50}}
\put(336.67,98.71){\line(3,2){16.50}}
\put(353.45,99.82){\line(0,1){10.00}}
\put(336.67,98.71){\line(0,-1){10.00}}
\put(336.67,88.71){\line(-2,1){50.00}}
\put(286.67,113.71){\line(0,1){10.00}}
\put(286.67,123.71){\line(2,-1){50.00}}
\put(316.67,108.71){\line(3,2){17.00}}
\put(315.45,138.15){\line(-3,-2){12.00}}
\put(315.45,138.15){\line(2,-3){13.78}}
\put(286.67,123.71){\line(3,2){9.00}}
\put(353.47,109.86){\line(-2,1){28.54}}
\put(301.03,126.50){\line(2,-3){9.42}}
\bezier{36}(339.26,80.00)(339.07,84.63)(337.59,89.26)
\bezier{776}(320.00,111.00)(290.48,152.85)(148.57,150.95)
\bezier{776}(320.00,113.81)(290.48,156.19)(148.57,154.28)
\put(149.00,150.95){\line(0,1){3.33}}
\end{picture}\\*[-17mm]
$$
We make a full circle by the jack-plane
and continue the process until the
shave breaks away. Then we transpose
all matrices of the remaining
representation and repeat this process.
The obtained representation
$\mathbb{P}$ of $\cal{C}$ is regular,
and the shaves split into a direct sum
of matrix representations of the form
${\mathbb{G}}_{ij}$.

Note that this proves Theorem
\ref{t1.1} since $\mathbb P$ is
isomorphic to a matrix representation
$(I_n,\dots,I_n,J)$, where $J$ is a
nonsingular Jordan (or Frobenius)
canonical matrix with respect to
similarity; see the end of Section
\ref{s1}. Hence $\mathbb A$ is
isomorphic to a direct sum of
representations of the form (i)--(iii)
from Theorem \ref{t1.1}. The uniqueness
of this decomposition follows from
Theorem \ref{t.0}.

\subsection*{The algorithm for cycles:}

This algorithm for every matrix
representation
\begin{equation}\label{2.1.1}
\unitlength 1.00mm
\linethickness{0.4pt}
\begin{picture}(85.33,5.00)
(10,0)
\put(25.33,0.00){\makebox(0,0)[cc]{$1$}}
\put(65.33,0.00){\makebox(0,0)[cc]{$\cdots$}}
\put(85.33,0.00){\makebox(0,0)[cc]{$t$}}
\put(45.33,0.00){\makebox(0,0)[cc]{$2$}}
\put(30.33,0.00){\line(1,0){10.00}}
\put(50.33,0.00){\line(1,0){10.00}}
\put(70.33,0.00){\line(1,0){10.00}}
\put(35.33,5.00){\makebox(0,0)[ct]{${A}_1$}}
\put(55.33,5.00){\makebox(0,0)[ct]{${A}_2$}}
\put(75.33,5.00){\makebox(0,0)[ct]{${A}_{t-1}$}}
\put(0.33,0.00){\makebox(0,0)[cc]{$\mathbb
A:$}}
\put(55.33,-7.33){\makebox(0,0)[cc]{$A_t$}}
\put(29.83,-1.67){\line(-4,1){0.2}}
\bezier{208}(81.00,-1.67)(55.33,-7.33)(29.83,-1.67)
\end{picture}\\*[20pt]
\end{equation}
of a cycle $\cal C$ (see \eqref{1.2})
constructs a decomposition
\begin{equation}\label{1.xe}
\mathbb{A}\simeq
\mathbb{P}(\mathbb{A}')\oplus
\widetilde{\mathbb{A}},
\end{equation}
where $\mathbb{A}'$ is formed by the
matrices of a chain of linear mappings,
$\mathbb{P}$ sends $\mathbb{A}'$ to a
representation of $\cal C$ that is
isomorphic to a direct sum of
representations of the form
${\mathbb{G}}_{ij}$ (see \eqref{ppp}
and compare with Example \ref{e1.a}),
and $\widetilde{\mathbb{A}}$ is a
representation of $\cal C$ that
satisfies the following condition for
each arrow:
\begin{equation}\label{awa}
\parbox{22em}
{If the arrow is oriented clockwise,
then the matrix\\ assigned to it has
linearly independent rows.}
\end{equation}
In steps $1,\, 2,\,\dots$ of the
algorithm we will construct quiver
representations ${\mathbb A}^{(1)}$,
${\mathbb A}^{(2)}$,\,\dots\,.

\medskip

\noindent {\bf Steps $\pmb
{1,2,\dots,l-1}$:} In step $1$ of the
algorithm, we check the condition
\eqref{awa} for the representation
$\mathbb A$ and the arrow $\alpha_1$.
If this condition holds, we put
${\mathbb A}^{(1)}={\mathbb A}$. If
this condition holds for $\alpha_2$
too, we put ${\mathbb A}^{(2)}={\mathbb
A}$, and so on.

If after $t$ steps we found that this
condition holds for all arrows of $\cal
C$, then we put
\begin{equation}\label{dd}
l=t+1,\qquad \mathbb{A}'=0, \qquad
\widetilde{\mathbb{A}}=\mathbb{A}
\end{equation}
and stop the algorithm. Otherwise, we
set
\begin{equation}\label{mmm}
 l=\min\Bigl\{i\in\{1,\dots,t\}\,\Bigl|\,
 \alpha_i \text{ in $\mathbb A$
  does not satisfy
 \eqref{awa}}\Bigr\}
\end{equation}
and continue the algorithm as follows:
\medskip

\noindent {\bf Step {\textit l}:}  By
unitary transformations at the vertex
$[l+1]$, we reduce the matrix $A_l$ of
$\mathbb A$ to a matrix
\begin{equation*}\label{2.1.5}
   {\left[\begin{tabular}{c}
    $0$ \\ \hline
    \raisebox{-5pt}{$A_{l}^{(l)}$}
\end{tabular}\right]}\!\!\!\!\!
\begin{tabular}{l}
    $\,\}
    {d_{(l+1)'}^{(l)}\:
    \text{rows}}$\\[1mm]
    $\,\}
    {d_{[l+1]}^{(l)}\: \text{rows}}$\\[2mm]
\end{tabular},\quad
d_{(l+1)'}^{(l)}>0,
\end{equation*}
where the rows of $A_{l}^{(l)}$ are
linearly independent. This changes
$A_{l+1}$; we denote the obtained
matrix by $A^{(l)}$ and construct the
representation\\[-10mm]
$$
\unitlength 1.25mm
\linethickness{0.4pt}
\begin{picture}(90.33,28.00)(0,-8)
\put(0.00,10.00){\makebox(0,0)[cc]{${\mathbb
A}^{(l)}$:}}
\bezier{348}(88.67,-0.67)(45.00,-10.00)(1.00,-0.67)
\put(41.50,10.00){\makebox(0,0)[cc]{$(l+1)'$}}
\put(0.00,0.00){\makebox(0,0)[rc]{$1$}}
\put(1.00,0.00){\line(1,0){7.00}}
\put(11.60,0.00){\makebox(0,0)[cc]{$\cdots$}}
\put(27.26,0.00){\vector(1,0){9.00}}
\put(32.66,2.50){\makebox(0,0)[cc]
{$\scriptstyle{A_l^{(l)}}$}}
\put(41.50,0.00){\makebox(0,0)[cc]{$[l+1]$}}
\put(46.33,0.00){\line(1,0){11.00}}
\put(62.33,0.00){\makebox(0,0)[cc]{$[l+2]$}}
\put(26.,0.00){\makebox(0,0)[cc]{$l$}}
\put(67.33,0.00){\line(1,0){10.00}}
\put(72.33,2.00){\makebox(0,0)[cc]
{$\scriptstyle{A_{[l+2]}}$}}
\put(80.1,0.00){\makebox(0,0)[cc]{$\cdots$}}
\put(82.33,0.00){\line(1,0){6.00}}
\put(90.33,0.00){\makebox(0,0)[lc]{$t$}}
\put(46.66,9.33){\line(4,-3){10.50}}
\put(55.33,8.33){\makebox(0,0)[cc]
{\fbox{$\scriptstyle{A^{(l)}}$}}}
\put(14.33,0.00){\line(1,0){10.00}}
\put(19.66,2.00){\makebox(0,0)[cc]
{$\scriptstyle{A_{l-1}}$}}
\end{picture}
$$
(the other matrices are the same as in
\eqref{2.1.1}). Its dimensions at the
vertices $(l+1)'$ and $[l+1]$ are
$d_{(l+1)'}^{(l)}$ and
$d_{[l+1]}^{(l)}$, and the arrow
$(l+1)'\llin [l+2]$ has the orientation
of $[l+1]\llin [l+2]$. The matrix
$A^{(l)}$ is partitioned into the
strips $ A^{(l)}_{(l+1)'}$ and
$A^{(l)}_{[l+1]}$, which are assigned
to the arrows $(l+1)'\llin [l+2]$ and $
[l+1]\llin [l+2]$, see \eqref{x2.1acc}.
\medskip

\noindent{\textbf{Step $\pmb{r\ (r>
l)}$:}} Assume we have constructed in
step $r-1$ a representation
\\[1em]
\begin{equation*}\label{m}
\unitlength 1mm \linethickness{0.4pt}
\begin{picture}(90.33,22.33)(0,22)
\put(-10.00,45.00){\makebox(0,0)[rc]
{${\mathbb A}^{(r-1)}$:}}
\put(0.00,25.00){\makebox(0,0)[rc]
{$(t+1)'$}}
\put(1.00,25.00){\line(1,0){5.00}}
\put(13.50,25.00){\makebox(0,0)[cc]
{$(t+2)'$}}
\put(20.33,25.00){\line(1,0){6.0}}
\multiput(28.,24.0)(2,0){26}{$\cdot$}
\put(80.33,25.00){\line(1,0){8.00}}
\put(90.33,25.00){\makebox(0,0)[lc]
{$(2t)'$}}
\bezier{348}(88.67,-0.67)(45.00,-10.00)(1.00,-0.67)\put(0.00,10.00){\makebox(0,0)[rc]{$(kt+1)'$}}
\put(1.00,10.00){\line(1,0){7.00}}
\put(12.00,9.70){\makebox(0,0)[cc]
{$\cdots$}}
\put(15.33,10.00){\line(1,0){8.00}}
\put(19.33,13.3){\makebox(0,0)[cc]
{$\scriptstyle{A_{(r-1)'}^{(r-1)}}$}}
\put(25.67,10.00){\makebox(0,0)[cc]{$r'$}}
\multiput(1.33,11.67)(2,0.25){6}{$\cdot$}
\multiput(25,14.3)(2,0.25){32}{$\cdot$}
\put(0.00,0.00){\makebox(0,0)[rc]{$1$}}
\put(1.00,0.00){\line(1,0){7.00}}
\put(12.00,-0.30){\makebox(0,0)[cc]
{$\cdots$}}
\put(15.33,0.00){\line(1,0){7.0}}
\put(18.7,3.30){\makebox(0,0)[cc]
{$\scriptstyle{A_{[r-1]}^{(r-1)}}$}}
\put(25.67,0.00){\makebox(0,0)[cc]{$[r]$}}
\put(28.00,0.00){\line(1,0){8.50}}
\put(43.00,0.00){\makebox(0,0)[cc]{$[r+1]$}}
\put(49.00,0.00){\line(1,0){7.20}}
\put(53.00,3.30){\makebox(0,0)[cc]
{$\scriptstyle{A^{(r-1)}_{[r+1]}}$}}
\put(63.00,0.00){\makebox(0,0)[cc]{$[r+2]$}}
\put(69.00,0.00){\line(1,0){7.00}}
\put(73.00,3.30){\makebox(0,0)[cc]
{$\scriptstyle{A_{[r+2]}^{(r-1)}}$}}
\put(80.7,-0.20){\makebox(0,0)[cc]{$\cdots$}}
\put(84,0.00){\line(1,0){5.00}}
\put(90.33,0.00){\makebox(0,0)[lc]{$t$}}
\put(27.33,9.33){\line(4,-3){9.00}}
\put(92.2,43.7){\makebox(0,0)[rc]
{$(l+1)'
\stackrel{A^{(r-1)}_{(l+1)'}}{\lllin}
(l+2)'
\stackrel{A^{(r-1)}_{(l+2)'}}{\lllin}
\cdots\
  \frac{}{\qquad}\ t'$}}
\put(1.00,26.33){\line(6,1){87.00}}
\put(38.00,8.83){\makebox(0,0)[cc]
{\fbox{$\scriptstyle{A^{(r-1)}}$}}}
\put(45.16,-8.5){\makebox(0,0)[cc]
{$\scriptstyle{A_{t}^{(r-1)}}$}}
\end{picture}\\*[8.6em]
\end{equation*}
where each arrow $\alpha_{i'}:i'\llin
(i+1)'$ has the orientation of
$\alpha_{[i]}:[i]\llin [i+1]$ in $\cal
C$, and $\alpha_{r'}:r'\llin [r+1]$ has
the orientation of
$\alpha_{[r]}:[r]\llin [r+1]$.

We will reduce ${\mathbb A}^{(r-1)}$ by
unitary transformations at the vertex
$[r+1]$:
\begin{itemize}
  \item[(i)]
If $\alpha_{[r]}$ is oriented
clockwise, then $A^{(r-1)}$ consists of
two vertical strips with
$\dim_{r'}{\mathbb A}^{(r-1)}$ and
$\dim_{[r]}{\mathbb A}^{(r-1)}$ columns
(see \eqref{x2.1aac}--\eqref{2.1.5aq});
we reduce it by unitary
row-transformations as follows:
\begin{equation} \label{2.3''a}
A^{(r-1)}=[A_{r'}^{(r-1)}|A_{[r]}^{(r-1)}]
\mapsto \left[\begin{tabular}{c|c}
   $A_{r'}^{(r)}$ & 0 \\
   $*$ & $A_{[r]}^{(r)}$
\end{tabular}
\right],
\end{equation}
where $A_{[r]}^{(r)}$ has linearly
independent rows.

  \item[(ii)]
If $\alpha_{[r]}$ is oriented
counterclockwise, then $A^{(r-1)}$
consists of two horizontal strips with
$\dim_{r'}{\mathbb A}^{(r-1)}$ and
$\dim_{[r]}{\mathbb A}^{(r-1)}$ rows;
we reduce it by unitary
column-transformations as follows:
\begin{equation} \label{2.3'''}
A^{(r-1)}=\left[\begin{tabular}{c}
  $A_{r'}^{(r-1)}$ \\ \hline
  \raisebox{-1.5mm}{$A_{[r]}^{(r-1)}$}
\end{tabular} \right]\mapsto
\left[\begin{tabular}{cc}
   $A_{r'}^{(r)}$ & 0 \\ \hline
   $*$ &
   \raisebox{-1.1mm}{$A_{[r]}^{(r)}$}
\end{tabular}\right],
\end{equation}
where $A_{r'}^{(r)}$ has linearly
independent columns.
\end{itemize}
These unitary transformations at the
vertex $[r+1]$ change the matrix
$A_{[r+1]}^{(r-1)}$ too; we denote the
obtained matrix by $A^{(r)}$ and
construct the representation
\\[1.5em]
\begin{equation}\label{1ww}
\unitlength 1.1mm \linethickness{0.4pt}
\begin{picture}(90.33,24.33)(1.5,20)
\put(-3.50,45.00){\makebox(0,0)[rc]
{${\mathbb A}^{(r)}$:}}
\put(0.00,25.00){\makebox(0,0)[rc]{$(t+1)'$}}
\put(1.00,25.00){\line(1,0){6.00}}
\put(13.50,25.00){\makebox(0,0)[cc]{$(t+2)'$}}
\put(19.33,25.00){\line(1,0){7.00}}
\multiput(28.5,23.9)(1.96,0){26}{$\cdot$}
\put(80.33,25.00){\line(1,0){8.00}}
\put(90.33,25.00){\makebox(0,0)[lc]{$(2t)'$}}
\bezier{348}(88.67,-0.67)(45.00,-10.00)(1.00,-0.67)
\put(0.00,10.00){\makebox(0,0)[rc]{$(kt+1)'$}}
\put(1.00,10.00){\line(1,0){7.00}}
\put(12.00,9.7){\makebox(0,0)[cc]{$\cdots$}}
\put(15.33,10.00){\line(1,0){8.00}}
\put(19.33,12.70){\makebox(0,0)[cc]
{$\scriptstyle{A_{(r-1)'}^{(r-1)}}$}}
\put(25.67,10.00){\makebox(0,0)[cc]{$r'$}}
\multiput(1.33,13)(2.01,0.23){44}{$\cdot$}
\put(0.00,0.00){\makebox(0,0)[rc]{$1$}}
\put(1.00,0.00){\line(1,0){7.00}}
\put(12.04,-0.250){\makebox(0,0)[cc]{$\cdots$}}
\put(15.33,0.00){\line(1,0){8.00}}
\put(19.33,3){\makebox(0,0)[cc]
{$\scriptstyle{A_{[r-1]}^{(r-1)}}$}}
\put(33.33,3.0){\makebox(0,0)[cc]
{$\scriptstyle{A_{[r]}^{(r)}}$}}
\put(33.33,13){\makebox(0,0)[cc]
{$\scriptstyle{A_{r'}^{(r)}}$}}
\put(25.67,0.00){\makebox(0,0)[cc]{$[r]$}}
\put(28.00,0.00){\line(1,0){9.00}}
\put(43.00,0.00){\makebox(0,0)[cc]{$[r+1]$}}
\put(48.00,0.00){\line(1,0){9.00}}
\put(63.00,0.00){\makebox(0,0)[cc]{$[r+2]$}}
\put(68.00,0.00){\line(1,0){9.00}}
\put(73.00,3.0){\makebox(0,0)[cc]
{$\scriptstyle{A_{[r+2]}^{(r-1)}}$}}
\put(80.7,-0.25){\makebox(0,0)[cc]{$\cdots$}}
\put(84,0.00){\line(1,0){4.30}}
\put(90.33,0.00){\makebox(0,0)[lc]{$t$}}
\put(1.00,26.33){\line(6,1){87.00}}
\put(58.5,8.33){\makebox(0,0)[cc]
{\fbox{$\scriptstyle{A^{(r)}}$}}}
\put(28.00,10.00){\line(1,0){9.00}}
\put(43.00,10.00){\makebox(0,0)[cc]{$(r+1)'$}}
\put(48.00,8.33){\line(3,-2){9}}
\put(92,43.8){\makebox(0,0)[rc]
{$(l+1)'
\stackrel{A^{(r-1)}_{(l+1)'}}{\lllin}(l+2)'
\stackrel{A^{(r-1)}_{(l+2)'}}{\lllin}
\cdots\
  \frac{}{\qquad}\ t'$}}
\end{picture}\\*[33mm]
\end{equation}
where ${A}^{(r)}$ is partitioned into
two strips:
\begin{equation}\label{vvv}
 A^{(r)}=\begin{cases}
 \left[\begin{tabular}{c|c}
$ A^{(r)}_{(r+1)'}$&
 $A^{(r)}_{[r+1]}$
 \end{tabular}\right]&
\text{if $\alpha_{[r+1]}$ is oriented
clockwise}, \\[1em]
 \left[\begin{tabular}{c}
   $ A^{(r)}_{(r+1)'}$\\ \hline
    $A^{(r)}_{[r+1]}$
\end{tabular}\right] &
\text{if $\alpha_{[r+1]}$ is oriented
counterclockwise},
\end{cases}
\end{equation}
and these strips are assigned to the
arrows \[ (r+1)'\llin [r+2],\qquad
[r+1]\llin [r+2].\]

\subsection*{The result:}

We make at least $t$ steps and stop at
the first representation $\mathbb
A^{(n)}$ with
\begin{equation}\label{wwv}
n\ge t\qquad\text{and}\qquad
{A}^{(n)}_{(n+1)'}=0.
\end{equation}

The matrix ${A}^{(n)}_{(n+1)'}$ is
assigned to the arrow $(n+1)'\llin
[n+2]$. Deleting this arrow, we break
$\mathbb A^{(n)}$ into two
representations:\\[-3mm]
\begin{equation}\label{1k}
\unitlength 0.9mm \linethickness{0.4pt}
\begin{picture}(91.33,28.00)(-7,25)
\put(-30.00,40.00){\makebox(0,0)[cc]
{${\mathbb A}'$:}}
\put(0.00,25.00){\makebox(0,0)[rc]{$(t+1)'$}}
\put(1.00,25.00){\line(1,0){7.00}}
\put(16.50,25.00){\makebox(0,0)[cc]{$(t+2)'$}}
\put(23.33,25.00){\line(1,0){8.00}}
\multiput(34.5,23.6)(2,0){22}{$\cdot$}
\put(80.33,25.00){\line(1,0){8.00}}
\put(90.33,25.00){\makebox(0,0)[lc]{$(2t)'$}}
\put(-17.50,12.00){\makebox(0,0)[lc]
{$(kt+1)'\ \frac{}{\qquad\quad} \
\cdots\ \stackrel{
A^{(n)}_{n'}}{\lllin\!\!\!\llin}
(n+1)'$}}
\multiput(1.33,13)(2,0.22){44}{$\cdot$}
\put(91.33,44.50){\makebox(0,0)[rc]
{$(l+1)'\stackrel{A^{(n)}_{(l+1)'}}{\lllin}
(l+2)'\stackrel{A^{(n)}_{(l+2)'}}{\lllin}
\cdots\, \lllin\ t'$}}
\put(1.00,26.33){\line(6,1){85.70}}
\end{picture}
\\*[5em]
\end{equation}
and\\
\begin{equation}\label{1h}
\unitlength 1.15mm
\linethickness{0.4pt}
\begin{picture}(90.33,6.00)(0,-1)
\put(-9.00,1.00){\makebox(0,0)[cc]
{$\widetilde{\mathbb{A}}$:}}
\bezier{348}(89.67,-0.67)(45.00,-10.00)(1.00,-0.67)
\put(0.00,0.00){\makebox(0,0)[rc]{$1$}}
\put(1.00,0.00){\line(1,0){7.00}}
\put(12.00,-0.15){\makebox(0,0)[cc]{$\cdots$}}
\put(15.33,0.00){\line(1,0){8.00}}
\put(33.33,3){\makebox(0,0)[cc]
{$\scriptstyle{A_{[n]}^{(n)}}$}}
\put(25.67,0.00){\makebox(0,0)[cc]{$[n]$}}
\put(28.00,0.00){\line(1,0){9.50}}
\put(43.00,0.00){\makebox(0,0)[cc]{$[n+1]$}}
\put(48.00,0.00){\line(1,0){9.50}}
\put(53.00,3){\makebox(0,0)[cc]
{$\scriptstyle{A^{(n)}_{[n+1]}}$}}
\put(63.00,0.00){\makebox(0,0)[cc]{$[n+2]$}}
\put(71.00,0.00){\line(-1,0){3.00}}
\put(68.00,0.00){\line(1,0){10.00}}
\put(73.00,3){\makebox(0,0)[cc]
{$\scriptstyle{A_{[n+2]}^{(n)}}$}}
\put(81.9,-0.15){\makebox(0,0)[cc]{$\cdots$}}
\put(84.83,0.00){\line(1,0){5.00}}
\put(91.33,0.00){\makebox(0,0)[lc]{$t$}}
\end{picture}
\\*[2em]
\end{equation}
The representation ${\mathbb A}'$ is a
representation of the quiver
\begin{equation}\label{2.3v}
  (l+1)'\:\frac {\alpha_{(l+1)'}}
  {\qquad\qquad}\:(l+2)'\:
  \frac{\alpha_{(l+2)'}}{\qquad\qquad}\:
  \cdots\:
  \frac{\alpha_{n'}}{\quad\qquad}\:
  (n+1)',
\end{equation}
whose arrows $i'\llin (i+1)'$ have the
orientation of the arrows
$\alpha_{[i]}:[i]\llin [i+1]$ in $\cal
C$. By analogy with Example \ref{e1.a},
we construct the mapping $\mathbb P$
that sends a representation $\mathbb B$
of the quiver \eqref{2.3v} to a
representation $\mathbb D$ of the cycle
$\cal C$:\\[4mm]
\begin{equation}\label{2.3w}
\unitlength 0.8mm \linethickness{0.4pt}
\begin{picture}(91.33,30.00)(-8,13)
\put(0.00,25.00){\makebox(0,0)[rc]{
$\scriptstyle{(t+1)'}$}}
\put(1.00,25.00){\line(1,0){11.00}}
\put(20,25.00){\makebox(0,0)[cc]
{$\scriptstyle{(t+2)'}$}}
\put(27,25.00){\line(1,0){10.00}}
\multiput(39,23.6)(2.4,0){16}{$\cdot$}
\put(78.33,25.00){\line(1,0){10.00}}
\put(90.33,25.00){\makebox(0,0)[lc]
{$\scriptstyle{(2t)'}$}}
\bezier{348}(88.67,-12.00)(45.00,-21.33)(1.00,-12.00)
\put(1.00,26.33){\line(6,1){88.00}}
\put(0.00,-11.33){\makebox(0,0)[rc]{
$\scriptstyle{1}$}}
\put(1.00,-11.33){\line(1,0){15.00}}
\put(20,-11.33){\makebox(0,0)[cc]
{$\scriptstyle{2}$}}
\put(23,-11.33){\line(1,0){14.00}}
\multiput(39,-12.73)(2.4,0){16}{$\cdot$}
\put(78.33,-11.33){\line(1,0){10.00}}
\put(90.33,-11.33){\makebox(0,0)[lc]
{$\scriptstyle{t}$}}
\bezier{348}(88.67,-12.00)(45.00,-21.33)(1.00,-12.00)
\put(1.00,26.33){\line(6,1){88.00}}
\put(0.00,10.00){\makebox(0,0)[rc]
{$\scriptstyle{(kt+1)'}$}}
\put(1.00,10.00){\line(1,0){11.00}}
\put(20.00,10.00){\makebox(0,0)[cc]
{$\scriptstyle{(kt+2)'}$}}
\put(27,10.00){\line(1,0){10.00}}
\multiput(1.33,11)(2.46,0.32){36}{$\cdot$}
\put(47.50,10.00){\line(1,0){10.00}}
\put(43.00,9.50){\makebox(0,0)[cc]
{$\cdots$}}
\put(64.00,10.00){\makebox(0,0)[cc]
{$\scriptstyle{(n+1)'}$}}
\put(93.53,43.5){\makebox(0,0)[rc]
{${\scriptstyle (l+1)'}\!\!
\stackrel{B_{(l+1)'}}{\lllin}
{\scriptstyle (l+2)'}
\stackrel{B_{(l+2)'}}{\lllin}\cdots
\stackrel{B_{(t-1)'}}{\lllin}
{\scriptstyle t'}$}}
\put(-30,24.67){\makebox(0,0)[cc]
{${\mathbb B}:$}}
\put(-30,-11.33){\makebox(0,0)[cc]
{${\mathbb D}:$}}
\put(-35,8){\makebox(0,0)[cc]
{${\mathbb P}$}}
\put(-31.5,21.67){\vector(0,-1){29.00}}
\put(-33,21.67){\line(1,0){3.00}}
\put(45.00,36.00){\makebox(0,0)[cc]
{$\scriptstyle{B_{t'}}$}}
\put(45.00,-20.00){\makebox(0,0)[cc]
{$\scriptstyle{D_{t}}$}}
\put(33,28){\makebox(0,0)[cc]
{$\scriptstyle{B_{(t+2)'}}$}}
\put(33,12){\makebox(0,0)[cc]
{$\scriptstyle{B_{(kt+2)'}}$}}
\put(53,12){\makebox(0,0)[cc]
{$\scriptstyle{B_{n'}}$}}
\put(33,-9){\makebox(0,0)[cc]
{$\scriptstyle{D_{2}}$}}
\put(83,-9){\makebox(0,0)[cc]
{$\scriptstyle{D_{t-1}}$}}
\put(83,28){\makebox(0,0)[cc]
{$\scriptstyle{B_{(2t-1)'}}$}}
\put(14.00,22.00){\makebox(0,0)[rc]
{$\scriptstyle{B_{(t+1)'}}$}}
\put(14.00,7.00){\makebox(0,0)[rc]
{$\scriptstyle{B_{(kt+1)'}}$}}
\put(12.00,-9.00){\makebox(0,0)[rc]
{$\scriptstyle{D_{1}}$}}
\end{picture}\\*[7em]
\end{equation}
This mapping is known in representation
theory as a {\it push-down functor}
(see \cite[Sect. 14.3]{gab+roi}) and is
determined as follows:
\begin{equation}\label{eee}
D_i=\bigoplus_{\substack{[j]=i\\ l\le
j\le n+1}} B_{j'},\qquad i=1,2,\dots,
t,
\end{equation}
(i.e., $D_i$ is the direct sum of all
$B_{j'}$ disposed over it), where
\begin{equation}\label{jjj}
B_{l'}=0_{p0} \quad\text{with}\quad
  p=\dim_{(l+1)'}{\mathbb
  B}
\end{equation}
(recall that the arrow $\alpha_{l}$ is
oriented clockwise, see step $l$ of the
algorithm), and
 \[
 B_{(n+1)'}= \begin{cases}
    0_{0q} & \text{if $\alpha_{[n+1]}:[n+1]
    \longrightarrow [n+2]$}, \\
    0_{q0} & \text{if $\alpha_{[n+1]}:[n+1]
    \longleftarrow [n+2]$},
 \end{cases}
    \quad\text{with
  $q=\dim_{(n+1)'}{\mathbb
  B}$}.
\]

(The definition of ${\mathbb P}:
{\mathbb B}\mapsto {\mathbb D}$ becomes
clearer if the representations
${\mathbb B}$ and ${\mathbb D}$ are
given by vector spaces and linear
mappings: each vector space of
${\mathbb D}$ is the direct sum of the
vector spaces of ${\mathbb B}$ disposed
over it, and each linear mapping of
${\mathbb D}$ is determined by the
linear mappings of ${\mathbb B}$
disposed over it.)

The following proposition will be
proved in Section \ref{subs}.

\begin{proposition}\label{l2}
Let the algorithm for circles transform
a matrix representation $\mathbb A$ of
a cycle $\cal C$ to $\mathbb{A}'$ and
$\widetilde{\mathbb{A}}$. Then
\begin{itemize}
  \item[{\rm (a)}]
The condition \eqref{awa} holds for
$\widetilde{\mathbb{A}}$ and all
arrows.
  \item[{\rm (b)}]
If an arrow $\alpha_i$ is oriented
counterclockwise and the columns of
${A_i}$ are linearly independent, then
the columns of $\widetilde{A}_i$ are
linearly independent too.
  \item[{\rm (c)}] $\mathbb{A}\simeq
\mathbb{P}(\mathbb{A}')\oplus
\widetilde{\mathbb{A}}$.
\end{itemize}
\end{proposition}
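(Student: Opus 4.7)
The plan is to prove all three parts together by induction on the algorithm's steps, maintaining a single structural invariant. My invariant is that after step $r$, the enlarged-quiver representation $\mathbb A^{(r)}$ --- consisting of the lower cycle together with the chain vertices $(l+1)',\ldots,(r+1)'$ and all the residual connector blocks produced by the reductions --- pushes down to a representation isomorphic to the original $\mathbb A$ via the folding $j'\mapsto[j]$. The base case is immediate, and for the inductive step the unitary change of basis of $V_{[r+1]}$ is itself an isomorphism, while the orthogonal decomposition $V_{[r+1]}=V_{(r+1)'}\oplus V_{[r+1]}^{\text{new}}$ is reversed by folding; the vanishing top-right block in \eqref{2.3''a} or \eqref{2.3'''} provides the required compatibility.

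Part (c) will then follow by applying the invariant at step $n$. The stopping condition $A^{(n)}_{(n+1)'}=0$, combined with the zero top block established at step $l$ of the algorithm, ensures that after deleting the final zero arrow the chain $\mathbb A'$ and cycle $\widetilde{\mathbb A}$ sit in $\mathbb A^{(n)}$ as almost-separate summands, coupled only through the $\ast$-blocks from the reductions. After push-down these $\ast$-blocks become strictly block-triangular off-diagonal contributions to the cycle matrices; I would absorb them by a Sylvester-type change of basis on the cycle vertex spaces, yielding the asserted $\mathbb A\simeq\mathbb{P}(\mathbb A')\oplus\widetilde{\mathbb A}$.

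For parts (a) and (b) I would argue arrow-by-arrow using the explicit reductions. For a clockwise $\alpha_i$, either $i<l$ and the matrix satisfies \eqref{awa} originally by minimality of $l$ (with subsequent unitary operations preserving full row rank), or at some step $r$ with $[r]=i$ case (i) applies and \eqref{2.3''a} directly produces a block with linearly independent rows; later row-side reductions at $V_{[i+1]}$ become trivial because the left null space of a full-row-rank matrix is zero, so the newly introduced chain dimension vanishes, and any column-side split at $V_{[i]}$ that might lower the rank occurs at a step immediately followed by a row-side reduction at $V_{[i+1]}$ that re-establishes full row rank before the algorithm terminates. Part (b) is the mirror image, using case (ii) and \eqref{2.3'''} to obtain linearly independent columns on the $r'$-strip.

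The hardest part, as suggested, will be the block-triangular elimination in part (c). I plan to exhibit an ordering on the chain vertices compatible with the zero blocks in every reduction, so that the pushed-down cycle matrices are simultaneously block-upper-triangular, and then block-diagonalize them by solving the appropriate Sylvester-type system. Once this algebraic picture is clean, the rank-preservation arguments for (a) and (b) reduce to routine bookkeeping along the algorithm's trajectory, and the interaction between consecutive steps is pinned down by the explicit dimensions $l_i,k_i$ read off from the reductions.
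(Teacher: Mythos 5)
Your high-level invariant is exactly the one the paper uses: the push-down (the paper's functor $\mathbb F$) of the working representation $\mathbb A^{(r)}$ must stay isomorphic to $\mathbb A$, and from $\mathbb F(\mathbb A^{(n)})=\mathbb P(\mathbb A')\oplus\widetilde{\mathbb A}$ part~(c) drops out. Your sketches for~(a) and~(b) are also close to the paper's Lemma~\ref{l.2g}: an induction tracking whether $\widehat A_i^{(k)}$ has full row rank (clockwise) or full column rank (counterclockwise), where the stopping condition $A^{(n)}_{(n+1)'}=0$ closes the last case. But there is a genuine gap in~(c).

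The gap concerns the $\ast$-blocks. You describe them as still sitting inside $\mathbb A^{(n)}$ at the end, ``coupling'' the chain to the cycle, and propose to remove them by one final Sylvester-type change of basis. That picture is off: the algorithm \emph{discards} the $\ast$-block at each step $r$ (the quiver $\mathcal Q^{(r)}$ simply has no arrow $r'\!\to[r+1]$ to carry it), so $\mathbb A^{(n)}$ contains no $\ast$-blocks. What actually needs proof is that throwing the $\ast$-block away at step $r$ does not change the isomorphism class of the push-down. The paper isolates precisely this as the notion of a \emph{triangular representation} (one for which adding $[r+1]$-columns to $(r+1)'$-columns, or $(r+1)'$-rows to $[r+1]$-rows, does not change $\mathbb F$ up to isomorphism) and proves two things: (1) each $\mathbb A^{(r)}$ is triangular, by a step-$r$ argument that first block-triangularizes and then kills $\ast$ by a triangular transformation; and (2) --- this is the piece your sketch lacks --- triangularity is \emph{preserved under transformations at the next vertex} (Lemma~\ref{l.2e}), which is needed so that the property proved at step $r$ is still usable at step $r+1$. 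Without this commutation lemma, the inductive chain $\mathbb F(\mathbb A^{(r-1)})\simeq\mathbb F(\mathbb A^{(r)})$ breaks: after the unitary transformation at $[r+1]$ you still have a $\ast$-block, you need to kill it by a change of basis on $V_{[r+1]}=V_{(r+1)'}\oplus V_{[r+1]}^{\mathrm{new}}$, and you must know that doing so does not invalidate the triangularity you will rely on at step $r+1$. Your ``simultaneously block-upper-triangular, then one Sylvester system'' plan could in principle be made to work, but you would then have to carry \emph{all} the discarded $\ast$-blocks through all the remaining unitary steps explicitly --- which is exactly the bookkeeping that the paper's step-by-step triangularity lemma is designed to avoid, and which your proposal does not actually set up.
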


\section{Main theorem}\label{s_main}

\begin{theorem}\label{th}
A regularizing decomposition
\eqref{1.7b} of a matrix representation
$\mathbb A$ of a cycle $\cal C$ can be
constructed in 3 steps using only
unitary transformations:
\begin{itemize}
  \item[$1.$]
Applying the algorithm for cycles to
${\mathbb A}$, we get $\mathbb{A}\simeq
\mathbb{P}(\mathbb{A}')\oplus
\widetilde{\mathbb{A}}$.
  \item[$2.$]
Applying the algorithm for cycles to
the matrix representation ${\mathbb
B}:=\widetilde{\mathbb{A}}^T$ of the
cycle ${\cal C}^{\,T}$ $($see
\eqref{1.xy}$)$, we get
$\widetilde{\mathbb{A}}^T\simeq
\mathbb{P}(\mathbb{B}')\oplus
\widetilde{\mathbb{B}}$.
  \item[$3.$]
Applying the algorithm for chains to
$\mathbb{A}'$ and $\mathbb{B}^{\prime
T}$, we get
\begin{equation}\label{pp}
  \mathbb{A}'\simeq {\mathbb
L}_{i_1j_1}\oplus \dots\oplus {\mathbb
L}_{i_pj_p},\quad\mathbb{B}^{\prime
T}\simeq {\mathbb
L}_{i_{p+1}j_{p+1}}\oplus \dots\oplus
{\mathbb L}_{i_qj_q}.
\end{equation}
\end{itemize}
Then
\begin{equation}\label{qq}
\mathbb{A}\simeq {\mathbb
G}_{i_1j_1}\oplus \dots\oplus {\mathbb
G}_{i_qj_q}\oplus\widetilde{\mathbb{B}}^T
\end{equation}
$($see \eqref{ppp}$)$ and the
representation
$\widetilde{\mathbb{B}}^T$ is regular.
\end{theorem}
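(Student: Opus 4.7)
\medskip
\noindent\textbf{Proof plan.} The plan has three phases: combine the outputs of steps~1 and~2 of the theorem into an isomorphism $\mathbb{A}\simeq \mathbb{P}(\mathbb{A}')\oplus \mathbb{P}(\mathbb{B}')^T\oplus \widetilde{\mathbb{B}}^T$; verify that the candidate residue $\widetilde{\mathbb{B}}^T$ is regular; and identify the two push-down summands with direct sums of representations $\mathbb{G}_{ij}$ using the chain decompositions \eqref{pp} from step~3.

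First I would apply Proposition~\ref{l2}(c) to step~1, obtaining $\mathbb{A}\simeq \mathbb{P}(\mathbb{A}')\oplus \widetilde{\mathbb{A}}$. Applying the same proposition to step~2 --- which runs the cycle algorithm on $\mathbb{B}=\widetilde{\mathbb{A}}^T$ in ${\cal C}^T$ --- gives $\widetilde{\mathbb{A}}^T\simeq \mathbb{P}(\mathbb{B}')\oplus \widetilde{\mathbb{B}}$, and transposing via \eqref{1.yx} yields $\widetilde{\mathbb{A}}\simeq \mathbb{P}(\mathbb{B}')^T\oplus \widetilde{\mathbb{B}}^T$. Substitution produces the claimed three-term decomposition of $\mathbb{A}$.

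The crux is showing $\widetilde{\mathbb{B}}^T$ is regular. Proposition~\ref{l2}(a) applied to step~1 ensures every clockwise arrow of $\widetilde{\mathbb{A}}$ in ${\cal C}$ carries a matrix with linearly independent rows; transposing, every counterclockwise arrow of $\mathbb{B}$ in ${\cal C}^T$ starts step~2 with a matrix of linearly independent columns, and Proposition~\ref{l2}(b) preserves that property through step~2. Together with Proposition~\ref{l2}(a) for step~2 --- every clockwise arrow of $\widetilde{\mathbb{B}}$ in ${\cal C}^T$ ends with a matrix of linearly independent rows --- transposing back to ${\cal C}$ shows that in $\widetilde{\mathbb{B}}^T$ every clockwise arrow carries a matrix with linearly independent rows and every counterclockwise arrow carries a matrix with linearly independent columns. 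In either case this forces $\dim_{[i+1]}\widetilde{\mathbb{B}}^T\le \dim_{[i]}\widetilde{\mathbb{B}}^T$ for the arrow $\alpha_i$, so walking once around ${\cal C}$ closes the inequality chain into equalities; every matrix is then square and full rank, hence nonsingular, and $\widetilde{\mathbb{B}}^T$ is regular.

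Finally, Proposition~\ref{t2.a} supplies the decompositions \eqref{pp}. By \eqref{eee} the push-down $\mathbb{P}$ is additive, and $\mathbb{P}(\mathbb{L}_{ij})\simeq \mathbb{G}_{ij}$: the dimension of $\mathbb{P}(\mathbb{L}_{ij})$ at a vertex $v$ of ${\cal C}$ counts $\{k\in\{i,\dots,j\}:[k]=v\}$, matching $\dim_v \mathbb{G}_{ij}$ from Lemma~\ref{l00}(a), while the identity maps of $\mathbb{L}_{ij}$ transport to the walk identities of $\mathbb{G}_{ij}$. Hence $\mathbb{P}(\mathbb{A}')$ decomposes into the first $p$ summands of \eqref{qq}, and the analogous identification in ${\cal C}^T$ followed by transposition (which sends a $\mathbb{G}$-representation of ${\cal C}^T$ to one of ${\cal C}$ with the walk reversed, still of $\mathbb{G}$-type) yields the remaining summands, giving \eqref{qq}. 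The main obstacle is the regularity argument: one must carefully track the row/column rank conditions across the transposition separating the two cycle-algorithm runs. The push-down identifications and the final assembly are then essentially formal.
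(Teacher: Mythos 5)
Your proposal is correct and follows essentially the same route as the paper's own proof: Proposition~\ref{l2}(c) together with \eqref{1.yx} to assemble the three-term decomposition, the identification $\mathbb{P}(\mathbb{L}_{ij})\simeq\mathbb{G}_{ij}$ for the irregular part, and the chain of dimension inequalities $\dim_1\ge\dim_2\ge\cdots\ge\dim_t\ge\dim_1$ derived from Proposition~\ref{l2}(a) and (b) to force all matrices of the residue to be nonsingular. The only cosmetic difference is that you argue regularity directly on $\widetilde{\mathbb{B}}^T$ in $\mathcal{C}$, whereas the paper argues on $\widetilde{\mathbb{B}}$ in $\mathcal{C}^T$ and then transposes at the end; these are equivalent.
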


\begin{proof}
By \eqref{1.yx} and Proposition
\ref{l2}(c),
\[
\mathbb{A}\simeq
\mathbb{P}(\mathbb{A}') \oplus
(\mathbb{P}(\mathbb{B}'))^T\oplus
\widetilde{\mathbb{B}}^T=
\mathbb{P}(\mathbb{A}') \oplus
\mathbb{P}(\mathbb{B}'^T)\oplus
\widetilde{\mathbb{B}}^T.
\]
Substituting \eqref{pp}, we obtain
\[
\mathbb{A}\simeq \mathbb{P}({\mathbb
L}_{i_1j_1})\oplus \dots\oplus
\mathbb{P}({\mathbb L}_{i_qj_q})
\oplus\widetilde{\mathbb{B}}^T.
\]
This
proves \eqref{qq} since
$\mathbb{P}({\mathbb L}_{ij})= {\mathbb
G}_{ij}$.

Let us prove that
$\widetilde{\mathbb{B}}^T$ is regular.
By Proposition \ref{l2}(a), every
matrix of $\widetilde{\mathbb{A}}$
assigned to an arrow oriented clockwise
has linearly independent rows. The
matrix representation ${\mathbb{B}}=
\widetilde{\mathbb{A}}^T$ is
constructed by transposing all
matrices, and it is a representation of
the cycle ${\cal C}^T$ obtained from
${\cal C}$ by changing the direction of
each arrow. Hence every matrix of
${\mathbb{B}}$ assigned to an arrow
oriented counterclockwise has linearly
independent columns; by Proposition
\ref{l2}(b) the same holds for the
matrices of $\widetilde{\mathbb{B}}$.
Moreover, by Proposition \ref{l2}(a)
every matrix of
$\widetilde{\mathbb{B}}$ assigned to an
arrow oriented clockwise has linearly
independent rows. Hence,
\[
\dim_{[i+1]}\widetilde{\mathbb{B}}
=\rank \widetilde{{B}}_i \le
\dim_i\widetilde{\mathbb{B}}
\]
for all vertices $i=1,\dots,t$. We have
\[
\dim_1\widetilde{\mathbb{B}}\ge
\dim_2\widetilde{\mathbb{B}}\ge\cdots\ge
\dim_t\widetilde{\mathbb{B}}\ge
\dim_1\widetilde{\mathbb{B}}.
\]
Therefore, each matrix
$\widetilde{{B}}_i$ is square and its
rows or columns are linearly
independent. So $\widetilde{{B}}_i$ is
nonsingular and the representation
$\widetilde{\mathbb{B}}$ is regular.
Then $\widetilde{\mathbb{B}}^T$ is
regular too.
\end{proof}

\section{Proof of Proposition \ref{t2.a}}
 \label{xs2a}

In each step $r\in\{1,\dots,t-1\}$ of
the algorithm for chains (Section
\ref{s4}) we constructed the matrix
$B_r$ of the form \eqref{x2.6'} or
\eqref{x2.7'}. Denote by $D_r$ the
matrix obtained from $B_r$ by
replacement of all blocks $H_{i}$ by
$I_{l_i}$ and all blocks $*$ by $0$.
Let us prove that the representation
\begin{equation*}\label{x3.z}
{\mathbb D}_{r} :\qquad 1\ \frac
{D_1}{\qquad}\ \cdots\
  \frac{D_{r}}{\qquad}\
  (r+1)\ \frac{A_{r+1}'}{\qquad}\ (r+2)\
  \frac{A_{r+2}}{\qquad}\
  \cdots\
  \frac{{{A}}_{t-1}}{\qquad}\
  t
\end{equation*}
of the quiver \eqref{x1.5} is
isomorphic to the initial
representation $\mathbb A$:
\begin{equation}\label{x3.y}
  {\mathbb A}\simeq {\mathbb
  D}_{r},\qquad r=1,\dots,t-1.
\end{equation}

In step 1 we reduced $\mathbb A$ to
\[
{\mathbb B}_{1} :\qquad 1\ \frac
{B_1}{\qquad}\
  2\ \frac{A_{2}'}{\qquad}\ 3\
  \frac{A_{3}}{\qquad}\
  \cdots\
  \frac{{{A}}_{t-1}}{\qquad}\
  t
\]
by unitary transformations at vertices
$1$ and $2$ (see \eqref{x2.1'}). Using
transformations at vertex 1, we reduce
$B_1$ to
\begin{equation}\label{x2.ee}
D_1=
\begin{bmatrix}
0&I_k\\0&0
\end{bmatrix},
\end{equation}
and so ${\mathbb A}$ is isomorphic to
\[
{\mathbb D}_{1} :\qquad 1\ \frac
{D_1}{\qquad}\
  2\ \frac{A_{2}'}{\qquad}\ 3\
  \frac{A_{3}}{\qquad}\
  \cdots\
  \frac{{{A}}_{t-1}}{\qquad}\
  t.
\]
We may produce at vertex $2$ of
${\mathbb D}_1$ every transformation
given by a nonsingular block-triangular
matrix
\[
S_2=
\begin{bmatrix}
S_{11}&S_{12}\\0&S_{22}
\end{bmatrix},
\]
where $S_{11}$ is $k$-by-$k$ if
$\alpha_1:1\longrightarrow 2$, and
$S_{22}$ is $k$-by-$k$ if
$\alpha_1:1\longleftarrow 2$. This
transformation spoils the block $I_k$
of $D_1$ but we recover it by
transformations at vertex $1$.

Reasoning by induction on $r$, we
assume that ${\mathbb A}$ is isomorphic
to
\[ {\mathbb D}_{r-1} :\qquad 1\
   \frac {D_1}{\qquad}\ \cdots\
  \frac{D_{r-1}}{\qquad}\
  r\ \frac{A_{r}'}{\qquad}\ (r+1)\
  \frac{A_{r+1}}{\qquad}\
  \cdots\
  \frac{{{A}}_{t-1}}{\qquad}\ t
\]
(where $D_1,\dots,D_{r-1}$ are obtained
from $B_1,\dots,B_{r-1}$ and $A'_r$ is
taken from \eqref{x2.1c}) and that
transformations at vertices
$1,\dots,r-1$ may recover the matrices
$D_1,\dots,D_{r-1}$ of ${\mathbb
D}_{r-1}$ after each transformation at
vertex $r$ given by a nonsingular
block-triangular matrix
\begin{equation}\label{x3.x}
S_{r}= \begin{bmatrix}
S_{11}&S_{12}&\cdots&S_{1r}\\
&S_{22}&\cdots&S_{2r}\\&&\ddots&\vdots\\
0&&&S_{rr}
\end{bmatrix},
\end{equation}
in which the sizes of diagonal blocks
coincide with the sizes of horizontal
strips of $B_{r-1}$ if
$\alpha_{r-1}:(r-1)\longrightarrow r$,
or with the sizes of vertical strips of
$B_{r-1}$ if
$\alpha_{r-1}:(r-1)\longleftarrow r$,
see \eqref{x2.6'} and \eqref{x2.7'}.

In step $r$ of the algorithm, we
reduced $A_r'$ to $B_r$ of the form
\eqref{x2.6'} or \eqref{x2.7'} by
unitary transformations at the vertices
$r$ and $r+1$; moreover, we used only
those transformation at vertex $r$ that
were given by unitary block-diagonal
matrices partitioned as \eqref{x3.x}.
By the same transformations at the
vertices $r$ and $r+1$ of ${\mathbb
D}_{r-1}$, we reduce its matrix $A_r'$
to $B_r$. Then we reduce $B_r$ to $D_r$
by a transformation at vertex $r$ given
by a matrix of the form \eqref{x3.x},
and restore $D_1,\dots,D_{r-1}$ by
transformations at vertices
$1,\dots,r-1$. The obtained
representation is ${\mathbb D}_{r}$,
and so
\[
{\mathbb A}\simeq{\mathbb
D}_{r-1}\simeq{\mathbb D}_{r}.
\]
Moreover, we may produce at the vertex
$r+1$ of ${\mathbb D}_{r}$ all
transformations given by
block-triangular matrices, restoring
the matrix $D_r$ by transformations at
vertex $r$ given by matrices of the
form \eqref{x3.x}, and then restoring
$D_1,\dots,D_{r-1}$ by transformations
at the vertices $1,\dots,r-1$. This
proves the isomorphism \eqref{x3.y}.
\medskip

We now transform the representation
${\mathbb D}_r$ ($1\le r<t$) of the
quiver \eqref{x1.5} to a representation
${\mathbb Q}_r$  of a new quiver as
follows. We first replace each vertex
$i\in\{1,2,\dots,r+1\}$ of ${\mathbb
D}_r$ by the vertices
$i_1,\dots,i_{d_i}$, where
\[
d_i=\dim_i{\mathbb D}_r= \dim_i{\mathbb
A}.
\]
Then we replace the arrow $(r+1)\
\frac{A_{r+1}'}{\qquad}\ (r+2)$ by the
arrows\\[10mm]
\[
\unitlength 0.4mm \linethickness{0.4pt}
\begin{picture}(25.00,25.00)(0,1)
\put(13.00,38.00){\makebox(0,0)[lc]
{\fbox{$\scriptstyle{ A_{r+1}'}$}}}
\put(0.00,25.00){\makebox(0,0)[rc]
{$(r+1)_2$}}
\put(-21.00,8.00){\makebox(0,0)
{$\cdots\cdots\cdots$}}
\put(0.00,-5.00){\makebox(0,0)[rc]
{$(r+1)_{d_{r+1}}$}}
\put(25.00,10.00){\makebox(0,0)[lc]{$(r+2)
$}}
\put(3.00,24.00){\line(5,-3){18.00}}
\put(3.00,-4.00){\line(5,3){18.00}}
\put(3.00,42.00){\line(3,-4){18.00}}
\put(0.00,47.00){\makebox(0,0)[rc]
{$(r+1)_1$}}
\end{picture}\\*[15pt]
\]
of the same direction.

At last, we replace each arrow $i
\stackrel{D_i}{\lllin}(i+1)$ with $i\le
r$ by arrows that are in one-to-one
correspondence with the units of the
matrix ${D}_i$: every unit at the place
$(p,q)$ in ${D}_i$ determines the arrow
\[
i_q\stackrel{I_{1}}{\longrightarrow}
(i+1)_p\qquad\text{if}\qquad
\alpha_i:i\longrightarrow (i+1),
\]
or the arrow
\[
i_p\stackrel{I_{1}}{\longleftarrow}
(i+1)_q\qquad\text{if}\qquad
\alpha_i:i\longleftarrow (i+1).\]
(These arrows represent the action on
the basic vectors of the linear
operator
\[
\ {\mathbb C} i_1\oplus\dots\oplus
{\mathbb C} i_{d_i}\lllin\: {\mathbb C}
(i+1)_1\oplus\dots\oplus {\mathbb C}
(i+1)_{d_{i+1}}
\]
directed as $\alpha_i:i\llin (i+1)$ and
given by the matrix $D_i$.) Since in
each row and in each column of $D_i$ at
most one entry is $1$ and the others
are $0$, two arrows $i_p\llin (i+1)_q$
and $i_{p'}\llin (i+1)_{q'}$ have no
common vertices (${\cal D}_i$ sends
each basic vector to a basic vector or
to $0$  and cannot send two basic
vectors to the same basic vector).
Denote the obtained  representation by
${\mathbb Q}_r$.

The quiver representation ${\mathbb
Q}_{t-1}$ is a union of nonintersecting
chains; each of them determines a
representation of the form ${\mathbb
L}_{ij}$. Hence, ${\mathbb D}_{t-1}$ is
a direct sum of these representations.
By \eqref{x3.y}, ${\mathbb A}\simeq
{\mathbb D}_{t-1}$, so ${\mathbb
D}_{t-1}$ is the canonical form of
${\mathbb A}$, and we need to prove
${\mathbb L}({\mathbb A})={\mathbb
D}_{t-1}$ (see \eqref{x2.1g}).

It suffices to show that
\[
{\mathbb
Q}_{t-1}={\cal P}_{t-1}\cup {\mathbb
M}_{t-1}
\]
(see the set of indices in
\eqref{x2.1g}). The equality ${\mathbb
Q}_{1}={\cal P}_{1}\cup {\mathbb
M}_{1}$ holds since the matrix $D_1$ is
obtained from $B_1$ by replacement of
$H$ with $I_k$ (see \eqref{x2.1'} and
\eqref{x2.ee}). Reasoning by induction,
we assume that ${\mathbb Q}_{r-1}={\cal
P}_{r-1}\cup {\mathbb M}_{r-1}$. Then
${\mathbb Q}_{r}={\cal P}_{r}\cup
{\mathbb M}_{r}$ by the construction of
${\cal P}_{r}$ and ${\mathbb M}_{r}$ in
step $r$ of the algorithm for chains
and since $D_r$ is obtained from $B_r$
by replacement of all blocks $H_{i}$ by
$I_{l_i}$ and all blocks $*$ by $0$.
This proves Proposition \ref{t2.a}.

\begin{example}
Suppose we apply the algorithm to a
matrix representation
\begin{equation*}\label{x1.4a}
{\mathbb A} :\qquad 1\
\stackrel{A_1}{\longrightarrow}\ 2\
\stackrel{A_2}{\longrightarrow}\ 3\
\stackrel{A_3}{\longleftarrow}\ 4
\end{equation*}
of dimension $(4,5,4,5)$ and obtain
\begin{equation*}\label{xzz}
B_1=\begin{bmatrix}
  0_{31} & H_{1} \\
  0_{21} & 0_{23}
\end{bmatrix},\
B_2=\left[\begin{tabular}{cc|cc}
   $0_{21}$&$H_{2}$ & $*$& $*$ \\
  $0_{11}$&$0_{12}$ &
  $0_{11}$&$H_{3}$\\
  $0_{11}$&$0_{12}$ &
  $0_{11}$&$0_{11}$
\end{tabular}
\right],\
B_3=\left[\begin{tabular}{ccc}
   $0_{22}$&$H_{4}$&$*$ \\  \hline
  $0_{12}$&$0_{12}$&$*$\\  \hline
  $0_{12}$&$0_{12}$&$H_{5}$
\end{tabular}
\right],
\end{equation*}
where $H_1,\dots,H_5$ are nonsingular
$3\times 3$, $2\times 2$, $1\times 1$,
$2\times 2$, and $1\times 1$ matrices.
Then
\[
\begin{CD}
{\mathbb D}_3:\qquad
1@>{\begin{bmatrix}
0&1&0&0\\0&0&1&0\\0&0&0&1\\0&0&0&0\\0&0&0&0
\end{bmatrix}}>>2
@>{\begin{bmatrix}
0&1&0&0&0\\0&0&1&0&0\\0&0&0&0&1\\0&0&0&0&0
\end{bmatrix}}>>3
@<{\begin{bmatrix}
0&0&1&0&0\\0&0&0&1&0\\0&0&0&0&0\\0&0&0&0&1
\end{bmatrix}}<<4
\end{CD}
\]
and\\[-16mm]
\begin{equation}\label{yyy}
\unitlength 1mm \linethickness{0.4pt}
\begin{picture}(110.00,40.00)(0,19)
\put(0.00,20.00){\makebox(0,0)[cc]{${\mathbb
Q}_3:$}}
\put(20.00,40.00){\makebox(0,0)[cc]{$1_1$}}
\put(20.00,30.00){\makebox(0,0)[cc]{$1_2$}}
\put(20.00,20.00){\makebox(0,0)[cc]{$1_3$}}
\put(20.00,10.00){\makebox(0,0)[cc]{$1_4$}}
\put(50.00,40.00){\makebox(0,0)[cc]{$2_1$}}
\put(50.00,30.00){\makebox(0,0)[cc]{$2_2$}}
\put(50.00,20.00){\makebox(0,0)[cc]{$2_3$}}
\put(50.00,10.00){\makebox(0,0)[cc]{$2_4$}}
\put(50.00,0.00){\makebox(0,0)[cc]{$2_5$}}
\put(80.00,40.00){\makebox(0,0)[cc]{$3_1$}}
\put(80.00,30.00){\makebox(0,0)[cc]{$3_2$}}
\put(80.00,20.00){\makebox(0,0)[cc]{$3_3$}}
\put(80.00,10.00){\makebox(0,0)[cc]{$3_4$}}
\put(110.00,40.00){\makebox(0,0)[cc]{$4_1$}}
\put(110.00,30.00){\makebox(0,0)[cc]{$4_2$}}
\put(110.00,20.00){\makebox(0,0)[cc]{$4_3$}}
\put(110.00,10.00){\makebox(0,0)[cc]{$4_4$}}
\put(110.00,0.00){\makebox(0,0)[cc]{$4_5$}}
\put(107.00,1.00){\vector(-3,1){24.00}}
\put(107.00,12.00){\vector(-3,2){24.00}}
\put(107.00,22.00){\vector(-3,2){24.00}}
\put(53.00,2.00){\vector(3,2){24.00}}
\put(53.00,21.00){\vector(3,1){24.00}}
\put(53.00,31.00){\vector(3,1){24.00}}
\put(23.00,11.00){\vector(3,1){24.00}}
\put(23.00,21.00){\vector(3,1){24.00}}
\put(23.00,31.00){\vector(3,1){24.00}}
\end{picture}\\*[25mm]
\end{equation}
\bigskip
We have the canonical form of ${\mathbb
A}$:
\[
 {\mathbb A}\simeq
 {\mathbb L}_{11}\oplus
 {\mathbb L}_{12}\oplus
 {\mathbb L}_{14}\oplus
 {\mathbb L}_{14}\oplus
 {\mathbb L}_{22}\oplus
 {\mathbb L}_{23}\oplus
 {\mathbb L}_{34}\oplus
 {\mathbb L}_{44}\oplus
 {\mathbb L}_{44}
\]
\end{example}

Note that the block-triangular form of
$S_{r}$ (see \eqref{x3.x}) follows from
the disposition of the chains
\[
p_i
\stackrel{I_{1}}{\lllin}
  (p_i+1)\stackrel{I_{1}}{\lllin}
  \cdots \stackrel{I_{1}}{\lllin} r,
\qquad i=1,\dots,r+1,
\]
in the quiver representation ${\mathbb
M}_{r-1}$ (see \eqref{x2.1c}): they
represent the linear mappings and we
may add these chains from the top down
by changing bases in vector spaces;
this is clear for the quiver
representation \eqref{yyy}.

\section{Proof of Proposition \ref{l2}}
 \label{subs}

The representation ${\mathbb A}^{(r)}$
(see \eqref{1ww}) is a representation
of the quiver, which we will denote by
${\cal Q}^{(r)}$. For every
representation
\\[1em]
\\[1.5em]
\begin{equation}\label{m1}
\unitlength 1.1mm \linethickness{0.4pt}
\begin{picture}(90.33,24.33)(1.5,20)
\put(-3.50,45.00){\makebox(0,0)[rc]
{${\mathbb B}$:}}
\put(0.00,25.00){\makebox(0,0)[rc]{$(t+1)'$}}
\put(1.00,25.00){\line(1,0){6.00}}
\put(13.50,25.00){\makebox(0,0)[cc]{$(t+2)'$}}
\put(19.33,25.00){\line(1,0){7.00}}
\multiput(28.5,23.9)(1.96,0){26}{$\cdot$}
\put(80.33,25.00){\line(1,0){8.00}}
\put(90.33,25.00){\makebox(0,0)[lc]{$(2t)'$}}
\bezier{348}(88.67,-0.67)(45.00,-10.00)(1.00,-0.67)
\put(0.00,10.00){\makebox(0,0)[rc]{$(kt+1)'$}}
\put(1.00,10.00){\line(1,0){7.00}}
\put(12.00,9.7){\makebox(0,0)[cc]{$\cdots$}}
\put(15.33,10.00){\line(1,0){8.00}}
\put(19.33,12.70){\makebox(0,0)[cc]
{$\scriptstyle{B_{(r-1)'}}$}}
\put(25.67,10.00){\makebox(0,0)[cc]{$r'$}}
\multiput(1.33,13)(2.01,0.23){44}{$\cdot$}
\put(0.00,0.00){\makebox(0,0)[rc]{$1$}}
\put(1.00,0.00){\line(1,0){7.00}}
\put(12.04,-0.250){\makebox(0,0)[cc]{$\cdots$}}
\put(15.33,0.00){\line(1,0){8.00}}
\put(19.33,3){\makebox(0,0)[cc]
{$\scriptstyle{B_{[r-1]}}$}}
\put(33.33,3.0){\makebox(0,0)[cc]
{$\scriptstyle{B_{[r]}}$}}
\put(33.33,13){\makebox(0,0)[cc]
{$\scriptstyle{B_{r'}}$}}
\put(25.67,0.00){\makebox(0,0)[cc]{$[r]$}}
\put(28.00,0.00){\line(1,0){9.00}}
\put(43.00,0.00){\makebox(0,0)[cc]{$[r+1]$}}
\put(48.00,0.00){\line(1,0){9.00}}
\put(63.00,0.00){\makebox(0,0)[cc]{$[r+2]$}}
\put(68.00,0.00){\line(1,0){9.00}}
\put(73.00,3.0){\makebox(0,0)[cc]
{$\scriptstyle{B_{[r+2]}}$}}
\put(80.7,-0.25){\makebox(0,0)[cc]{$\cdots$}}
\put(84,0.00){\line(1,0){4.30}}
\put(90.33,0.00){\makebox(0,0)[lc]{$t$}}
\put(1.00,26.33){\line(6,1){87.00}}
\put(58.5,8.33){\makebox(0,0)[cc]
{\fbox{$\scriptstyle{B}$}}}
\put(28.00,10.00){\line(1,0){9.00}}
\put(43.00,10.00){\makebox(0,0)[cc]{$(r+1)'$}}
\put(48.00,8.33){\line(3,-2){9}}
\put(92,43.8){\makebox(0,0)[rc]
{$(l+1)'
\stackrel{B_{(l+1)'}}{\lllin}(l+2)'
\stackrel{B_{(l+2)'}}{\lllin} \cdots\
  \frac{}{\qquad}\ t'$}}
\end{picture}\\*[33mm]
\end{equation}
of this quiver, we define the
representation
\begin{equation*}
\unitlength 1.00mm
\linethickness{0.4pt}
\begin{picture}(85.33,5.00)
(10,0)
\put(25.33,0.00){\makebox(0,0)[cc]{$1$}}
\put(65.33,0.00){\makebox(0,0)[cc]{$\cdots$}}
\put(85.33,0.00){\makebox(0,0)[cc]{$t$}}
\put(45.33,0.00){\makebox(0,0)[cc]{$2$}}
\put(30.33,0.00){\line(1,0){10.00}}
\put(50.33,0.00){\line(1,0){10.00}}
\put(70.33,0.00){\line(1,0){10.00}}
\put(35.33,5.00){\makebox(0,0)[ct]{${D}_1$}}
\put(55.33,5.00){\makebox(0,0)[ct]{${D}_2$}}
\put(75.33,5.00){\makebox(0,0)[ct]{${D}_{t-1}$}}
\put(0.33,0.00){\makebox(0,0)[cc]
{${\mathbb{F}(\mathbb{B}})$:}}
\put(55.33,-7.33){\makebox(0,0)[cc]{$D_t$}}
\put(29.83,-1.67){\line(-4,1){0.2}}
\bezier{208}(81.00,-1.67)(55.33,-7.33)(29.83,-1.67)
\end{picture}\\*[20pt]
\end{equation*}
of the cycle $\cal C$ by ``gluing down
of the shave'' (see the beginning of
Section \ref{s1.3}):
\begin{equation*}\label{1p}
D_i=\Bigl(\bigoplus_{\substack{[j]=i\\
   l\le j\le r}}{ B_{j'}}\Bigr)\oplus
  \begin{cases}
   B_i & \text{if $i\ne [r+1]$},
\\
   B & \text{if $i=[r+1]$},
  \end{cases}
\end{equation*}
where $B_{l'}$ is defined by
\eqref{jjj} (compare with \eqref{eee}).
The mapping $\mathbb F$ is analogous to
the ``push-down functor'' \eqref{2.3w}.
Moreover, for the representation
$\mathbb A^{(n)}$, obtained in the last
step of the algorithm for cycles, we
have
\begin{equation}\label{1o}
\mathbb{F}(\mathbb{A}^{(n)})=
\mathbb{P}({\mathbb A}')\oplus
\widetilde{\mathbb{A}},
\end{equation}
where ${\mathbb A}'$ and
$\widetilde{\mathbb{A}}$ are the
representations \eqref{1k} and
\eqref{1h}.

By \eqref{2.1.5aq}, the matrix $B$ in
\eqref{m1} has the form
\begin{equation*}\label{1w}
 B=\begin{cases}
 [\,B_{(r+1)'}\,|\,B_{[r+1]}\,]& \text{if
 $\alpha_{[r+1]}$ is oriented
clockwise,} \\[0.1em]
 \left[\begin{tabular}{c}
    $B_{(r+1)'}$\\ \hline $B_{[r+1]}$
\end{tabular}\right] &
\text{if $\alpha_{[r+1]}$ is oriented
counterclockwise}.
  \end{cases}
\end{equation*}
By {\it triangular transformations}
with a representation $\mathbb B$ of
the form \eqref{m1}, we mean the
following transformations:
\begin{itemize}
\item[(i)]
additions of linear combinations of
columns of $B_{[r+1]}$ to columns of
$B_{(r+1)'}$ if $\alpha_{[r+1]}$ is
oriented clockwise,

\item[(ii)]
additions of linear combinations of
rows of $B_{(r+1)'}$ to rows of
$B_{[r+1]}$ if $\alpha_{[r+1]}$ is
oriented counterclockwise.
\end{itemize}
We say that $\mathbb B$ is a {\it
triangular representation} if
\begin{equation*}\label{1t}
{\mathbb F}({\mathbb B})\simeq {\mathbb
F}({\mathbb B}^{\vartriangle})
\end{equation*}
for every representation ${\mathbb
B}^{\vartriangle}$ obtained from
${\mathbb B}$ by triangular
transformations.

\begin{lemma}\label{l.2e}
Suppose $\mathbb{D}$ is obtained from a
triangular representation $\mathbb B$
of ${\cal Q}^{(r)}$ by transformations
at the vertex $[r+2]$. Then
$\mathbb{D}$ is triangular too.
\end{lemma}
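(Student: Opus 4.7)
The plan is to reduce the triangularity of $\mathbb{D}$ to the assumed triangularity of $\mathbb{B}$, by observing that the change of basis at vertex $[r+2]$ commutes with every triangular transformation, and that the functor $\mathbb{F}$ preserves isomorphisms of ${\cal Q}^{(r)}$-representations.

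First I would fix notation: let $S$ denote the nonsingular matrix implementing the basis change at $[r+2]$ that carries $\mathbb{B}$ to $\mathbb{D}$. Since a basis change at a single vertex is an isomorphism of ${\cal Q}^{(r)}$-representations, and since $\mathbb{F}$ assembles the matrices $B_{j'}$ together with the $B_i$ or $B$ into direct sums (see \eqref{jjj} and the formula for $D_i$ preceding it), $\mathbb{F}$ sends the isomorphism $\mathbb{B}\simeq\mathbb{D}$ to an isomorphism $\mathbb{F}(\mathbb{B})\simeq\mathbb{F}(\mathbb{D})$ of representations of $\mathcal{C}$: the matrix $S$ becomes block-diagonal, acting on the summand of $D_{[r+2]}$ coming from the original vertex $[r+2]$ of ${\cal Q}^{(r)}$ and as the identity on the other summands. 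This is the exact analogue for $\mathbb{F}$ of the push-down property \eqref{2.3w} used for $\mathbb{P}$.

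Next I would verify the main point: any triangular transformation on $\mathbb{B}$ commutes with the action of $S$ at vertex $[r+2]$. Both types of transformation touch only the matrix $B$ (the change of basis $S$ additionally rescales $B_{[r+2]}$, but triangular transformations leave $B_{[r+2]}$ alone), and they act on opposite sides of $B$. When $\alpha_{[r+1]}$ is clockwise, the triangular transformation is a unipotent column operation on $B=[B_{(r+1)'}\,|\,B_{[r+1]}]$, while $S$ contributes a row operation $B\mapsto S^{-1}B$; when $\alpha_{[r+1]}$ is counterclockwise, the strips are stacked vertically, the triangular transformation becomes a row operation, and $S$ acts as a column operation $B\mapsto BS$. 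Row and column operations commute, so the two transformations commute in either orientation case.

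Combining these ingredients: given any triangular transformation $\vartriangle$ producing $\mathbb{D}^{\vartriangle}$ from $\mathbb{D}$, apply the same $\vartriangle$ to $\mathbb{B}$ to obtain some $\mathbb{B}^{\vartriangle}$; by the commutation, $\mathbb{D}^{\vartriangle}$ is obtained from $\mathbb{B}^{\vartriangle}$ by the same $S$, so the first paragraph gives $\mathbb{F}(\mathbb{B}^{\vartriangle})\simeq\mathbb{F}(\mathbb{D}^{\vartriangle})$. Triangularity of $\mathbb{B}$ gives $\mathbb{F}(\mathbb{B}^{\vartriangle})\simeq\mathbb{F}(\mathbb{B})$, and another application of the first paragraph gives $\mathbb{F}(\mathbb{B})\simeq\mathbb{F}(\mathbb{D})$. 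Chaining these three isomorphisms yields $\mathbb{F}(\mathbb{D}^{\vartriangle})\simeq\mathbb{F}(\mathbb{D})$, as required. The only real obstacle is the bookkeeping in the commutation step: one must track, in each of the two orientation cases for $\alpha_{[r+1]}$, which side of $B$ each transformation acts on; once that is pinned down, the rest of the argument is formal.
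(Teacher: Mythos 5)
Your proof is correct and follows essentially the same route as the paper's: fix the basis change $S$ at $[r+2]$, carry a given triangular transformation back to $\mathbb{B}$ to obtain $\mathbb{B}^{\vartriangle}$, observe that the same $S$ then carries $\mathbb{B}^{\vartriangle}$ to $\mathbb{D}^{\vartriangle}$ because $S$ acts on one side of the block matrix $B$ while the unipotent factor $R$ acts on the other, and chain $\mathbb{F}(\mathbb{D})\simeq\mathbb{F}(\mathbb{B})\simeq\mathbb{F}(\mathbb{B}^{\vartriangle})\simeq\mathbb{F}(\mathbb{D}^{\vartriangle})$. The paper just states the commutation by writing out the explicit matrix identities with $R=\bigl[\begin{smallmatrix}I&0\\ *&I\end{smallmatrix}\bigr]$ rather than phrasing it as ``row and column operations commute,'' but the argument is the same.
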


\begin{proof}
Let
\begin{equation*}\label{2.0'}
{\cal
S}=(I,\dots,I,S_{[r+2]},I,\dots,I):
 {\mathbb B}\is {\mathbb D}
\end{equation*}
(see \eqref{1.3}). We must prove that
${\mathbb F}(\mathbb D)\simeq{\mathbb
F}({\mathbb D}^{\vartriangle})$ for
every ${\mathbb D}^{\vartriangle}$
obtained from ${\mathbb D}$ by
triangular transformations. Denote by
${\mathbb B}^{\vartriangle}$ the matrix
representation obtained from ${\mathbb
B}$ by the same triangular
transformations. By \eqref{1.2aa} and
the definition of triangular
transformations, there is a block
matrix
\[
R=\begin{bmatrix}
 I&0\\ *&I
\end{bmatrix}
\]
such that
\begin{align*}
[D^{\vartriangle}_{(r+1)'}\,|\,
D^{\vartriangle}_{[r+1]}]
&=S_{[r+2]}[B_{(r+1)'}\,|\, B_{[r+1]}]
R\quad
 \text{if $\alpha_{[r+1]}$ is oriented
clockwise,}\\[3mm]
\left[\begin{tabular}{c}
    $D^{\vartriangle}_{(r+1)'}$\\ \hline
    $D^{\vartriangle}_{[r+1]}$
\end{tabular}\right]
&=R \left[\begin{tabular}{c}
    $B_{(r+1)'}$\\ \hline
    $B_{[r+1]}$
\end{tabular}\right]
S_{[r+2]}^{-1}\quad
  \text{if $\alpha_{[r+1]}$ is oriented
counterclockwise,}
\\[3mm]
D^{\vartriangle}_{[r+2]}&=
\begin{cases} B_{[r+2]} S_{[r+2]}^{-1}
  &\text{if $\alpha_{[r+2]}$ is oriented
clockwise,}\\ S_{[r+2]} B_{[r+2]}
  &\text{if $\alpha_{[r+2]}$ is oriented
counterclockwise}.
\end{cases}
\end{align*}
These equalities imply \[{\cal
S}=(I,\dots,I,S_{[r+2]},I,\dots,I):
{\mathbb B}^{\vartriangle} \is {\mathbb
D}^{\vartriangle}\] and
\[
{\mathbb F}({\mathbb D})\simeq {\mathbb
F}({\mathbb B})\simeq {\mathbb
F}({\mathbb B}^{\vartriangle}) \simeq
{\mathbb F}({\mathbb
D}^{\vartriangle}).
\]
\end{proof}

\begin{lemma}\label{l.2f}
Each representation ${\mathbb A}^{(r)}$
$($obtained in step $r$ of the
algorithm for cycles$)$ is triangular
and ${\mathbb F}({\mathbb
A}^{(r)})\simeq {\mathbb A}$.
\end{lemma}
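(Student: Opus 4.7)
The plan is to argue by induction on $r$, starting at $r=l$. The base case carries the essential idea, and the inductive step repeats it at a new vertex, using Lemma \ref{l.2e} to absorb the incidental side-effects of the reduction.

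For the base case $r=l$: step $l$ produces $A_l = \begin{bmatrix} 0 \\ A_l^{(l)} \end{bmatrix}$ by unitary row operations at vertex $[l+1]$, which are a change of basis at $[l+1]$ and hence give a $\cal C$-representation isomorphic to $\mathbb A$. The zero top block of $A_l$ then permits a canonical decomposition $V_{[l+1]}=U\oplus W$ in which the image of $A_l$ lies in $W$, and the simultaneously transformed matrix $A_{l+1}$ gets partitioned as $[A^{(l)}_{(l+1)'}\mid A^{(l)}_{[l+1]}]$. Declaring $U$ and $W$ to be separate vertices $(l+1)'$ and $[l+1]$ produces $\mathbb{A}^{(l)}$. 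The push-down $\mathbb F$ inverts the separation (glues $U$ and $W$ back into a single vertex), so $\mathbb F(\mathbb{A}^{(l)})\simeq \mathbb A$.

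For triangularity at $r=l$: a triangular transformation replaces $A^{(l)}_{(l+1)'}$ by $A^{(l)}_{(l+1)'}+A^{(l)}_{[l+1]}X$, so under $\mathbb F$ the matrix $D_{[l+1]}=A^{(l)}$ is right-multiplied by $\begin{bmatrix} I & 0 \\ X & I \end{bmatrix}$. This is the matrix form of a block-triangular change of basis at vertex $[l+1]$ in $\mathbb F(\mathbb{A}^{(l)})$; the inverse basis change left-multiplies $D_l=\begin{bmatrix} 0 \\ A_l^{(l)} \end{bmatrix}$ by $\begin{bmatrix} I & 0 \\ -X & I \end{bmatrix}$, which leaves $D_l$ unchanged precisely because its top block is zero. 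All other matrices of $\mathbb F(\mathbb{A}^{(l)})$ are unaffected by a basis change at $[l+1]$, so the basis change is a genuine isomorphism $\mathbb F(\mathbb{A}^{(l)})\simeq \mathbb F((\mathbb{A}^{(l)})^{\vartriangle})$.

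For the inductive step $r>l$, assuming the claim for $r-1$: step $r$ applies a unitary change of basis at $[r+1]$ to $\mathbb{A}^{(r-1)}$ and then splits $[r+1]$ into $(r+1)'$ and $[r+1]$ using the zero top-right block produced by \eqref{2.3''a} or \eqref{2.3'''}. Reading off the definition of $\mathbb F$, one checks that $\mathbb F(\mathbb{A}^{(r)})$ equals the $\cal C$-representation obtained by applying the previous $\mathbb F$ to the intermediate representation (after the basis change but before the split), which by the inductive hypothesis is isomorphic to $\mathbb A$. Triangularity of $\mathbb{A}^{(r)}$ is verified by the identical block-triangular basis-change argument, now at vertex $[r+1]$: the zero top-right block in \eqref{2.3''a}/\eqref{2.3'''} plays exactly the role that the zero top block of $A_l$ played in the base case, ensuring that the incoming matrix $D_{[r]}$ is invariant under the basis change. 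Lemma \ref{l.2e} is invoked to conclude that triangularity is not destroyed by the subsidiary transformations at $[r+2]$ that appear in step $r$.

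The main obstacle is the careful bookkeeping: at each step one must disentangle the unitary change of basis (an isomorphism at a single vertex of $\cal Q^{(r-1)}$) from the subsequent splitting (a reparametrization of the quiver that $\mathbb F$ undoes), and verify simultaneously that (a) the zero block legitimizes the splitting and (b) the same zero block makes the block-triangular basis change at $[r+1]$ leave the incoming matrix invariant. Once the base case is worked out explicitly, the inductive step is essentially a verbatim repetition.
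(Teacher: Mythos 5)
Your base case ($r=l$) is essentially correct: the top block of $D_l$ is entirely zero, so a block-lower-triangular basis change $S=\begin{bmatrix}I&0\\ -X&I\end{bmatrix}$ at $[l+1]$ acts on $D_l=\begin{bmatrix}0\\A_l^{(l)}\end{bmatrix}$ as the identity. However, the inductive step has two genuine gaps, and in both cases the missing ingredient is precisely the triangularity hypothesis, which your argument claims to \emph{conclude} but in fact never \emph{uses}.

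First, you assert that ``$\mathbb F(\mathbb{A}^{(r)})$ equals the $\mathcal C$-representation obtained by applying the previous $\mathbb F$ to the intermediate representation.'' This is false. After the unitary reduction at $[r+1]$ but before the split, the framed matrix is $\begin{bmatrix}A_{r'}^{(r)}&0\\ *&A_{[r]}^{(r)}\end{bmatrix}$ and $\mathbb F$ of that intermediate places this full block-triangular matrix at the arrow $\alpha_{[r]}$, whereas $\mathbb F(\mathbb A^{(r)})$ places there the block-\emph{diagonal} $A_{r'}^{(r)}\oplus A_{[r]}^{(r)}$ because the $*$ block is discarded by the split. These two $\mathcal C$-representations are not equal; they differ at the $*$ block. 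The paper's proof obtains the isomorphism between them only by first proving (via the inductive hypothesis and Lemma \ref{l.2e}) that the intermediate $\mathbb A^{(r-2/3)}$ is triangular, and then zeroing the $*$ by a triangular transformation, which preserves the $\mathbb F$-image up to isomorphism. You have no argument for this isomorphism.

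Second, your triangularity argument for $r>l$ fails: the incoming matrix $D_{[r]}$ is \emph{not} invariant under the basis change at $[r+1]$. Unlike the base case, $D_{[r]}=A_{r'}^{(r)}\oplus A_{[r]}^{(r)}$ has a nonzero top-left block $A_{r'}^{(r)}$, so $\begin{bmatrix}I&0\\ -X&I\end{bmatrix}D_{[r]}=\begin{bmatrix}A_{r'}^{(r)}&0\\ -XA_{r'}^{(r)}&A_{[r]}^{(r)}\end{bmatrix}$ picks up a nonzero $(2,1)$ entry for generic $X$. The ``zero top-right block'' you invoke in \eqref{2.3''a} controls the $(1,2)$ block, which has nothing to do with whether $S^{-1}D_{[r]}=D_{[r]}$. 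The paper handles this by pushing the triangular transformation of $\mathbb A^{(r)}$ back to the unsplit $\mathbb A^{(r-1/3)}$ as a change of basis at $[r+1]$, recovering the spoiled block by a triangular transformation (which is available because $\mathbb A^{(r-1/3)\vartriangle}$ is triangular by Lemma \ref{l.2e}), and only then splitting. Your one-vertex argument does not close the gap.

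A smaller point: you describe ``subsidiary transformations at $[r+2]$ that appear in step $r$''; no such transformations occur. All transformations in step $r$ are at $[r+1]$. Lemma \ref{l.2e} is applicable because its hypothesis vertex $[r+2]$ is stated relative to ${\cal Q}^{(r)}$, so when applied to ${\cal Q}^{(r-1)}$ it governs transformations at $[(r-1)+2]=[r+1]$, exactly the vertex used in step $r$.
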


\begin{proof}
The lemma is obvious if $l=t+1$ (see
\eqref{dd}). Suppose $l\le t$. The
statements hold for ${\mathbb
A}^{(1)},\dots,{\mathbb A}^{(l)}$.
Reasoning by induction, we assume that
they hold for ${\mathbb A}^{(r-1)}$
with $r-1\ge l$ and prove them for
${\mathbb A}^{(r)}$.

First we apply the unitary
transformations at the vertex $[r+1]$
from step $r$ of the algorithm for
cycles to the representation ${\mathbb
A}^{(r-1)}$ of the quiver ${\cal
Q}^{(r-1)}$: we reduce the matrix
$A^{(r-1)}$ to a block-triangular form
by transformations \eqref{2.3''a} or
\eqref{2.3'''} (depending on the
orientation of $\alpha_{[r]}$), and the
matrix $A_{[r+1]}^{(r-1)}$ to
$A^{(r)}$. Denote the obtained
representation by ${\mathbb
A}^{(r-2/3)}$.

Then we make zero the block $*$ of
\eqref{2.3''a} or \eqref{2.3'''} by
triangular transformations and obtain
the following representation ${\mathbb
A}^{(r-1/3)}$ of the quiver ${\cal
Q}^{(r-1)}$:
\\[1.5em]
\begin{equation*}
\unitlength 1.1mm \linethickness{0.4pt}
\begin{picture}(90.33,22.33)(-10,22)
\put(-10.00,45.00){\makebox(0,0)[rc]
{${\mathbb A}^{(r-1/3)}$:}}
\put(0.00,25.00){\makebox(0,0)[rc]{$(t+1)'$}}
\put(1.00,25.00){\line(1,0){5.00}}
\put(13.50,25.00){\makebox(0,0)[cc]{$(t+2)'$}}
\put(20.33,25.00){\line(1,0){6.0}}
\multiput(28.,24.0)(2,0){26}{$\cdot$}
\put(80.33,25.00){\line(1,0){8.00}}
\put(90.33,25.00){\makebox(0,0)[lc]{$(2t)'$}}
\bezier{348}(88.67,-0.67)(45.00,-10.00)(1.00,-0.67)\put(0.00,10.00){\makebox(0,0)[rc]{$(kt+1)'$}}
\put(1.00,10.00){\line(1,0){7.00}}
\put(12.00,9.70){\makebox(0,0)[cc]{$\cdots$}}
\put(15.33,10.00){\line(1,0){8.00}}
\put(19.33,13.3){\makebox(0,0)[cc]
{$\scriptstyle{A_{(r-1)'}^{(r-1)}}$}}
\put(25.67,10.00){\makebox(0,0)[cc]{$r'$}}
\multiput(1.33,11.67)(2,0.25){6}{$\cdot$}
\multiput(25,14.3)(2,0.25){32}{$\cdot$}
\put(0.00,0.00){\makebox(0,0)[rc]{$1$}}
\put(1.00,0.00){\line(1,0){7.00}}
\put(12.00,-0.30){\makebox(0,0)[cc]{$\cdots$}}
\put(15.33,0.00){\line(1,0){7.0}}
\put(18.7,3.30){\makebox(0,0)[cc]
{$\scriptstyle{A_{[r-1]}^{(r-1)}}$}}
\put(25.67,0.00){\makebox(0,0)[cc]{$[r]$}}
\put(28.00,0.00){\line(1,0){8.50}}
\put(43.00,0.00){\makebox(0,0)[cc]{$[r+1]$}}
\put(49.00,0.00){\line(1,0){7.20}}
\put(53.00,2.50){\makebox(0,0)[cc]
{$\scriptstyle{A^{(r)}}$}}
\put(63.00,0.00){\makebox(0,0)[cc]{$[r+2]$}}
\put(69.00,0.00){\line(1,0){7.00}}
\put(73.00,3.30){\makebox(0,0)[cc]
{$\scriptstyle{A_{[r+2]}^{(r-1)}}$}}
\put(80.7,-0.20){\makebox(0,0)[cc]{$\cdots$}}
\put(84,0.00){\line(1,0){5.00}}
\put(90.33,0.00){\makebox(0,0)[lc]{$t$}}
\put(27.33,9.33){\line(4,-3){9.00}}
\put(92,45){\makebox(0,0)[rc] {$(l+1)'
\stackrel{A^{(r-1)}_{(l+1)'}}{\lllin}
(l+2)'
\stackrel{A^{(r-1)}_{(l+2)'}}{\lllin}
\cdots
  \lllin\ t'$}}
\put(1.00,27){\line(6,1){87.3}}
\put(43.00,10){\makebox(0,0)[cc]
{\fbox{$\scriptstyle{A_{r'}^{(r)}\oplus
A_{[r]}^{(r)}}$}}}
\put(45.16,-8.5){\makebox(0,0)[cc]
{$\scriptstyle{A_{t}^{(r-1)}}$}}
\end{picture}\\*[9.5em]
\end{equation*}
By the induction hypothesis, ${\mathbb
A}\simeq{\mathbb F}({\mathbb
A}^{(r-1)})$ and ${\mathbb A}^{(r-1)}$
is triangular. By Lemma \ref{l.2e},
${\mathbb A}^{(r-2/3)}$ is triangular
too, and so
\[
{\mathbb F}({\mathbb
A}^{(r-2/3)})\simeq{\mathbb F}({\mathbb
A}^{(r-1/3)}).
\]
We have
\[
{\mathbb A}\simeq{\mathbb F}({\mathbb
A}^{(r-1)})\simeq{\mathbb F}({\mathbb
A}^{(r-2/3)})\simeq{\mathbb F}({\mathbb
A}^{(r-1/3)})={\mathbb F}({\mathbb
A}^{(r)}).
\]

Let ${\mathbb A}^{(r)\vartriangle}$ be
obtained from ${\mathbb A}^{(r)}$ by
triangular transformations. These
transformations reduce $A^{(r)}$ (see
\eqref{1ww}) to a new matrix
$A^{(r)\vartriangle}$ and do not change
the other matrices of ${\mathbb
A}^{(r)}$. Since
\[
A^{(r)}=A_{[r+1]}^{(r-1/3)},
\]
these transformations with
$A_{[r+1]}^{(r-1/3)}$ can be realized
by transformations at the vertex
$[r+1]$ of ${\mathbb A}^{(r-1/3)}$;
denote the obtained representation by
${\mathbb A}^{(r-1/3)\vartriangle}$, it
is triangular by Lemma \ref{l.2e}.
These transformations may spoil the
subdiagonal block $0$ of
\[
A^{(r-1/3)}={A_{r'}^{(r)}\oplus
A_{[r]}^{(r)}},
\]
but it is recovered
by triangular transformations and so
\[
{\mathbb F}({\mathbb
A}^{(r-1/3)\vartriangle}) \simeq
{\mathbb F}({\mathbb
A}^{(r)\vartriangle}).
\]
Since
\[
{\mathbb F}({\mathbb A}^{(r)})=
{\mathbb F}({\mathbb
A}^{(r-1/3)})\simeq {\mathbb
F}({\mathbb A}^{(r-1/3)\vartriangle})
\simeq {\mathbb F}({\mathbb
A}^{(r)\vartriangle}),
\]
the representation ${\mathbb
F}({\mathbb A}^{(r)})$ is triangular.
\end{proof}

\begin{lemma}\label{l.2g}
Let ${\mathbb A}^{(k)}$ be the
representation obtained from a
representation ${\mathbb A}$ in step
$k$ of the algorithm for cycles, and
let $k\ge l$ $($hence $l\le t$ by
\eqref{dd} and \eqref{mmm}$)$. Denote
\begin{equation*}\label{lll}
\widehat{A}^{(k)}_i=
  \begin{cases}
    {A}^{(k)}_i& \text{if}\ \
                i\ne [k+1], \\
    {A}^{(k)} & \text{if}\ \
                i= [k+1],
  \end{cases}
\end{equation*}
where $i=1,\dots,t$. Then
\begin{itemize}
  \item[\rm(i)]
The rows of $\widehat{A}^{(k)}_i$ are
linearly independent if $\alpha_i$ is
oriented clockwise and $i\le k$.
  \item[\rm(ii)]
The columns of $\widehat{A}^{(k)}_i$
are linearly independent if $\alpha_i$
is oriented counterclockwise and the
columns of ${A}_i$ are linearly
independent.
\end{itemize}
\end{lemma}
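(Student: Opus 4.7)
The plan is to prove (i) and (ii) simultaneously by induction on $k$, starting from the base $k=l$ and advancing one iteration of the algorithm at a time. The guiding principle is that every operation performed in steps $l,l+1,\dots$ is a unitary change of basis at some vertex $[k+1]$, so it can only alter matrices incident to that vertex, and a unitary change of basis preserves linear independence of both rows and of columns.

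In the base case $k=l$, only the matrices $A_l$ and $A_{l+1}$ are touched. For $i<l$, $A^{(l)}_i=A_i$ is unchanged, and the minimality of $l$ in \eqref{mmm} forces the original $\alpha_i$ to satisfy \eqref{awa}, which gives (i); (ii) transfers because the matrix has not moved. For $i=l$ the construction of $A_l^{(l)}$ directly delivers linearly independent rows, and (ii) is vacuous since $\alpha_l$ is clockwise. For $i=[l+1]$, $\widehat{A}^{(l)}_{[l+1]}=A^{(l)}$ is the unitary image of $A_{l+1}$, so the column independence demanded by (ii) is preserved; (i) is not in force since $l+1>l=k$. For larger $i$ nothing changes.

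For the inductive step from $k-1$ to $k$, I would split the verification according to $i$. When $i\notin\{[k],[k+1]\}$, $A^{(k)}_i=A^{(k-1)}_i$ and the inductive hypothesis transfers verbatim. For $i=[k]$ with $\alpha_{[k]}$ clockwise, the lower-right block in \eqref{2.3''a} is built to have linearly independent rows, yielding (i). For $i=[k]$ with $\alpha_{[k]}$ counterclockwise, the full column rank of $A^{(k-1)}_{[k]}$ (supplied by the inductive hypothesis whenever $A_{[k]}$ has linearly independent columns) is preserved by the unitary column transformation in \eqref{2.3'''}, so the transformed strip still has all columns linearly independent, and the consecutive sub-block $A^{(k)}_{[k]}$ inherits the property. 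Finally, for $i=[k+1]$, $\widehat{A}^{(k)}_{[k+1]}=A^{(k)}$ is the unitary image of $A^{(k-1)}_{[k+1]}$, and invertibility delivers whichever of (i) or (ii) the inductive hypothesis supplies.

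The main obstacle I anticipate is the counterclockwise case of (ii) at $i=[k]$, where the conclusion about $A^{(k)}_{[k]}$ has to be extracted from the larger transformed strip rather than read off directly; the key reduction is that any subset of a linearly independent family of vectors is again linearly independent. A secondary subtlety is the wrap-around regime $k\ge t$, where small indices $i$ can coincide with $[k]$ or $[k+1]$ and the inductive hypothesis must be invoked at the correct value of $i$; phrasing the lemma and the induction in terms of $i$ rather than the iteration count makes this go through, but the indexing deserves care.
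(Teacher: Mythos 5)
Your proposal is correct and follows essentially the same inductive argument as the paper's proof: induction on $k$ starting from $k=l$, with the inductive step reduced to the two vertices $[k]$ and $[k+1]$ touched in step $k$, using \eqref{2.3''a} and \eqref{2.3'''} together with preservation of linear independence under unitary transformations. The paper dispatches the base case and the unchanged indices in a single sentence; your version just spells out more of that detail, and makes explicit the subset-of-linearly-independent-columns step (for the counterclockwise case at $i=[k]$, where the inductive hypothesis applies to $\widehat{A}^{(k-1)}_{[k]}=A^{(k-1)}$, not merely its lower strip) that the paper leaves implicit.
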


\begin{proof}
We will prove the lemma by induction on
$k$. Clearly, the statements (i) and
(ii) hold for $k=l$. Assume they hold
for $k=r-1\ge l$ and prove them for
$k=r$. We need to check (i) and (ii)
only for $i=[r]$ and $i=[r+1]$ since in
step $r$ of the algorithm we change
$\widehat{A}^{(r-1)}_{[r]}$ and
$\widehat{A}^{(r-1)}_{[r+1]}$.

By \eqref{2.3''a}, the matrix $\widehat
A_{[r]}^{(r)}=A_{[r]}^{(r)}$ has
linearly independent rows if
$\alpha_{[r]}$ is oriented clockwise.
By \eqref{2.3'''}, this matrix has
linearly independent columns if both
$\alpha_{[r]}$ is oriented
counterclockwise and $\widehat
A_{[r]}^{(r-1)}=A^{(r-1)}$ has linearly
independent columns. Hence, (i) and
(ii) hold for $i=[r]$.

The statements (i) and (ii) hold for
$i=[r+1]$ by the induction hypothesis
and since $\widehat
A_{[r+1]}^{(r)}=A^{(r)}$ is obtained
from $A_{[r+1]}^{(r-1)}$ by elementary
transformations with its columns or
rows.
\end{proof}

\begin{proof}[Proof of Proposition \ref{l2}]
The statement (c) of Proposition
\ref{l2} follows from \eqref{1o} and
Lemma \ref{l.2f}, so we will prove (a)
and (b).

If $l=t+1$ (see \eqref{dd}), then
$\widetilde{\mathbb{A}}=\mathbb{A}$
satisfies (a) and (b).

Suppose $l\le t$. Then
$\widetilde{\mathbb{A}}$ is the
restriction of the representation
${\mathbb{A}}^{(n)}$ (obtained in the
last step of the algorithm) to the
cycle $\cal{C}$ and so $
\widetilde{A}_i=A^{(n)}_i$
($i=1,2,\dots,t$).

Since
\[
\widehat
A^{(n)}_i=A^{(n)}_i=\widetilde{A}_i
\]
if $i\ne [n+1]$,
\[
\widehat A^{(n)}_{[n+1]}
 =A^{(n)}=
 \left[\begin{tabular}{c|c}
$0$&
 $A^{(n)}_{[n+1]}$
 \end{tabular}\right]=
 \left[\begin{tabular}{c|c}
$0$&
 $\widetilde{A}_{[n+1]}$
 \end{tabular}\right]
\]
if $\alpha_{[n+1]}$ is oriented
clockwise (see \eqref{vvv} and
\eqref{wwv}), and
\[
\widehat A^{(n)}_{[n+1]}
 =A^{(n)}=
 \left[\begin{tabular}{c}
   $0$\\ \hline
    \raisebox{-3pt}{$A^{(n)}_{[n+1]}$}
\end{tabular}\right]=
\left[\begin{tabular}{c}
   $0$\\ \hline
    \raisebox{-3pt}{$\widetilde{A}_{[n+1]}$}
\end{tabular}\right]
\]
if $\alpha_{[n+1]}$ is oriented
counterclockwise, the statements (i)
and (ii) follow from Lemma \ref{l.2g},
in which $k=n\ge t$.
\end{proof}
\medskip

{\it The author wishes to express his
gratitude to Professor Roger Horn for
the hospitality and stimulating
discussions.}


\begin{thebibliography}{99}

\bibitem{bas}
H. Bass, {\it Algebraic $K$-theory},
Benjamin--Addison Wesley, 1968.

\bibitem{bel1}
G. Belitskii, Normal forms in matrix
spaces, {\it Integral Equations and
Operator Theory}, 38 (no. 3) (2000)
251-283.

\bibitem{bel-ser}
G. R. Belitskii and V. V. Sergeichuk,
Complexity of matrix problems, {\it
Linear Algebra Appl.}, 361 (2003)
203-222.

\bibitem{dob+pon}
N. M. Dobrovol$'$skaya and V. A.
Ponomarev, A pair of counter-operators
(in Russian),  {\it Uspehi Mat. Nauk}
20 (no. 6) (1965) 80--86; MR 36\#2631.

\bibitem{don}
P. Donovan and M. R. Freislich, {\it
The Representation Theory of Finite
Graphs and Associated Algebras,}
Carleton Lecture Notes 5, Ottawa, 1973.

\bibitem{gab}
P. Gabriel, Unzerlegbare Darstellungen
I, {\it Manuscripta Math.} 6 (1972)
71--103.

\bibitem{gab+roi}
P. Gabriel and A. V. Roiter, {\it
Representations of Finite-Dimensional
Algebras,} Encyclopaedia of Math. Sci.,
Vol 73 (Algebra VIII), Springer, 1992.

\bibitem{gan}
F. R. Gantmacher, {\it The Theory of
Matrices}, Vol. 1, Chelsea, New York,
1971.

\bibitem{gel}
J. Gelonch, The contragredient
equivalence for several matrices: a set
of invariants, {\it Linear Algebra
Appl.}, 291 (1999) 37--49.

\bibitem{hol}
O. Holtz, Applications of the duality
method to generalizations of the Jordan
canonical form, {\it Linear Algebra
Appl.}, 310 (2000) 11--17.

\bibitem{hor+mer}
R. A. Horn and D. I. Merino,
Contragredient equivalence: a canonical
form and some applications, {\it Linear
Algebra Appl.}  214 (1995) 43--92.

\bibitem{kro}
L. Kronecker, Algebraische Reduktion
der Scharen bilinearer Formen, {\it
Sitzungsber. Akademie Berlin} (1890)
763--776.

\bibitem{naz1}
L. A. Nazarova, Integer representations
of a four-group (in Russian), {\it
Dokl. Akad. Nauk SSSR} 140 (no. 5)
(1961) 1101--1014; MR 24\#A770.

\bibitem{naz2}
L. A. Nazarova, Representations of a
tetrad (in Russian), {\it Izv. Akad.
Nauk SSSR Ser. Mat.} 31 (no. 6) (1967)
1361--1378; MR 36\#6400.

\bibitem{naz}
L. A. Nazarova, Representations of
quivers of infinite type, {\it Math.
USSR Izv.} 7 (1973) 749--792.

\bibitem{naz+roi}
L. A. Nazarova and A. V. Roiter,
Finitely generated modules over a dyad
of two local Dedekind rings, and finite
groups which possess an abelian normal
divisor of index $p$ (in Russian), {\it
Izv. Akad. Nauk SSSR Ser. Mat.} 33
(1969) 65--89; MR 41\#5479.

\bibitem{rub+gel}
P. Rubi\'{o} and J. Gelonch, Doubly
multiplicable matrices, {\it Rend.
Inst. Mat. Univ. Trieste} 24 (no. 2)
(1992) 103--126.

\bibitem{ser}
V. V. Sergeichuk, {Canonical matrices
for linear matrix problems}, {\it
Linear Algebra Appl.}  317 (2000)
53--102.

\bibitem{doo}
P. Van Dooren, The computation of
Kronecker's canonical form of a
singular pencil, {\it Linear Algebra
Appl.}  27 (1979) 103--140.

\end{thebibliography}
\end{document}